\theoremstyle{plain}
\newtheorem{thm}{Theorem}[section]
\newtheorem*{thm*}{Theorem}
\newtheorem{prop}{Proposition}[section]
\newtheorem*{prop*}{Proposition}
\newtheorem{cor}{Corollary}[section]
\newtheorem*{cor*}{Corollary}
\newtheorem*{lem*}{Lemma}
\theoremstyle{definition}
\newtheorem{defn}{Definition}[section]
\newtheorem*{defn*}{Definition}
\newtheorem*{exmp*}{Example}
\newtheorem*{exmps*}{Examples}
\newtheorem*{rem*}{Remark}
\newtheorem*{rems*}{Remarks}
\newtheorem*{note*}{Note}
\newcommand{\N}{{\mathbb N}}
\newcommand{\Z}{{\mathbb Z}}
\newcommand{\C}{{\mathbb C}}
\DeclareMathOperator{\Rep}{Re\,}
\DeclareMathOperator{\Imp}{Im\,}
\DeclareMathOperator{\dist}{dist}
\DeclareMathOperator{\spa}{span}
\begin{document}
\title[On the Gevrey ultradifferentiability of weak solutions]
{On the Gevrey ultradifferentiability\\ 
of weak solutions\\
of an abstract evolution equation\\
with a scalar type spectral operator}
\author[Marat V. Markin]{Marat V. Markin}
\address{
Department of Mathematics\newline
California State University, Fresno\newline
5245 N. Backer Avenue, M/S PB 108\newline
Fresno, CA 93740-8001, USA
}
\email{mmarkin@csufresno.edu}
\dedicatory{To Dr. Valentina I. Gorbachuk, a remarkable person and mathematician, in honor of her jubilee.}
\subjclass[2010]{Primary 34G10, 47B40, 30D60; Secondary 47B15, 47D06, 47D60}
\keywords{Weak solution, scalar type spectral operator, Gevrey classes}
\begin{abstract}
Found are conditions on a scalar type spectral operator $A$ in a complex Banach space necessary and sufficient
for all weak solutions of the evolution equation
\begin{equation*}
y'(t)=Ay(t),\ t\ge 0,
\end{equation*}
to be strongly Gevrey ultradifferentiable of order $\beta\ge 1$, in particular analytic or entire, on $[0,\infty)$. Certain inherent smoothness improvement effects are analyzed.
\end{abstract}
\maketitle
\epigraph{\textit{It is impossible to be a mathematician without being a poet in soul.}}{Sofia Kovalevskaya}
\section[Introduction]{Introduction}

We find conditions on a scalar type spectral operator $A$ in a complex Banach space $X$ necessary and sufficient for all \textit{weak solutions} of the evolution equation
\begin{equation}\label{1}
y'(t)=Ay(t),\ t\ge 0,
\end{equation}
to be strongly \textit{Gevrey ulradifferentiable} of order $\beta\ge 1$, in particular \textit{analytic} or \textit{entire}, on $[0,\infty)$ and analyze certain inherent smoothness improvement effects to generalize the corresponding results for equation \eqref{1} with a \textit{normal operator} $A$ in a complex Hilbert space \cite{Markin2001(1)}.

The results of the present paper develop those of \cite{Markin2011}, where similar consideration is given to the strong differentiability of the weak solutions of \eqref{1} on $[0,\infty)$ and $(0,\infty)$.

\begin{defn}[Weak Solution]\label{ws}\ \\
Let $A$ be a closed densely defined linear operator in a Banach space $X$. A strongly continuous vector function $y:[0,\infty)\rightarrow X$ is called a {\it weak solution} of equation \eqref{1} if, for any $g^* \in D(A^*)$,
\begin{equation*}
\dfrac{d}{dt}\langle y(t),g^*\rangle = \langle y(t),A^*g^* \rangle,\ t\ge 0,
\end{equation*}
where $D(\cdot)$ is the \textit{domain} of an operator, $A^*$ is the operator {\it adjoint} to $A$, and $\langle\cdot,\cdot\rangle$ is the {\it pairing} between
the space $X$ and its dual $X^*$ (see \cite{Ball}).
\end{defn}

Due to the \textit{closedness} of $A$, the weak solution of \eqref{1} can be equivalently defined to be a strongly continuous vector function $y:[0,\infty)\mapsto X$ such that, for all $t\ge 0$,
\begin{equation*}
\int_0^ty(s)\,ds\in D(A)\ \text{and} \ y(t)=y(0)+A\int_0^ty(s)\,ds
\end{equation*}
and is also called a \textit{mild solution} (cf. {\cite[Ch. II, Definition 6.3]{Engel-Nagel}}, see also {\cite[Preliminaries]{Markin2017(2)}}).

Such a notion of \textit{weak solution}, which need not be differentiable in the strong sense, generalizes that of \textit{classical} one, strongly differentiable on $[0,\infty)$ and satisfying the equation in the traditional plug-in sense, the classical solutions being precisely the weak ones strongly differentiable on $[0,\infty)$.

When a closed densely defined linear operator $A$
in a complex Banach space $X$ generates a $C_0$-semigroup $\left\{T(t) \right\}_{t\ge 0}$ of  bounded linear operators (see, e.g., \cite{Hille-Phillips,Engel-Nagel}), i.e., the associated \textit{abstract Cauchy problem} (\textit{ACP})
\begin{equation}\label{ACP}
\begin{cases}
y'(t)=Ay(t),\ t\ge 0,\\
y(0)=f
\end{cases}
\end{equation}
is \textit{well-posed} (cf. {\cite[Ch. II, Definition 6.8]{Engel-Nagel}}), the weak solutions of equation \eqref{1} are the orbits
\begin{equation}\label{semigroup}
y(t)=T(t)f,\ t\ge 0,
\end{equation}
with $f\in X$ {\cite[Ch. II, Proposition 6.4]{Engel-Nagel}} (see also {\cite[Theorem]{Ball}}), whereas the classical ones are those with $f\in D(A)$
(see, e.g., {\cite[Ch. II, Proposition 6.3]{Engel-Nagel}}). 

Observe that, in our discourse, the associated \textit{ACP} may be \textit{ill-posed}, i.e., the scalar type spectral operator $A$ need not generate a $C_0$-semigroup (cf. \cite{Markin2002(2)}). 

\section[Preliminaries]{Preliminaries}

For the reader's convenience, we outline in this section certain essential preliminaries.

\subsection{Scalar Type Spectral Operators}\ 

Henceforth, unless specified otherwise, $A$ is supposed to be a {\it scalar type spectral operator} in a complex Banach space $(X,\|\cdot\|)$ and $E_A(\cdot)$ to be its
strongly $\sigma$-additive \textit{spectral measure} (the \textit{resolution of the identity}) assigning to each {\it Borel set} $\delta$ of the \textit{complex plane} $\C$ a {\it projection operator} $E_A(\delta)$ on $X$ and having the operator's \textit{spectrum} $\sigma(A)$ as its {\it support} \cite{Survey58,Dun-SchIII}.

Observe that, in a complex finite-dimensional space, 
the scalar type spectral operators are those linear operators on the space, for which there is an \textit{eigenbasis} (see, e.g., \cite{Survey58,Dun-SchIII}) and, in a complex Hilbert space, the scalar type spectral operators are precisely those that are similar to the {\it normal} ones \cite{Wermer}.

Associated with a scalar type spectral operator in a complex Banach space is the {\it Borel operational calculus} analogous to that for a \textit{normal operator} in a complex Hilbert space \cite{Survey58,Dun-SchII,Dun-SchIII,Plesner}, which assigns to any Borel measurable function $F:\sigma(A)\to \C$ a scalar type spectral operator
\begin{equation*}
F(A):=\int\limits_{\sigma(A)} F(\lambda)\,dE_A(\lambda)
\end{equation*}
defined as follows:
\[
F(A)f:=\lim_{n\to\infty}F_n(A)f,\ f\in D(F(A)),\
D(F(A)):=\left\{f\in X\middle| \lim_{n\to\infty}F_n(A)f\ \text{exists}\right\},
\]
where
\begin{equation*}
F_n(\cdot):=F(\cdot)\chi_{\{\lambda\in\sigma(A)\,|\,|F(\lambda)|\le n\}}(\cdot),
\ n\in\N,
\end{equation*}
($\chi_\delta(\cdot)$ is the {\it characteristic function} of a set $\delta\subseteq \C$, $\N:=\left\{1,2,3,\dots\right\}$ is the set of \textit{natural numbers}) and
\begin{equation*}
F_n(A):=\int\limits_{\sigma(A)} F_n(\lambda)\,dE_A(\lambda),\ n\in\N,
\end{equation*}
are {\it bounded} scalar type spectral operators on $X$ defined in the same manner as for a {\it normal operator} (see, e.g., \cite{Dun-SchII,Plesner}).

In particular,
\begin{equation}\label{power}
A^n=\int\limits_{\sigma(A)} \lambda^n\,dE_A(\lambda),\ n\in\Z_+,
\end{equation}
($\Z_+:=\left\{0,1,2,\dots\right\}$ is the set of \textit{nonnegative integers}, $A^0:=I$, $I$ is the \textit{identity operator} on $X$) and
\begin{equation}\label{exp}
e^{zA}:=\int\limits_{\sigma(A)} e^{z\lambda}\,dE_A(\lambda),\ z\in\C.
\end{equation}

The properties of the {\it spectral measure} and {\it operational calculus}, exhaustively delineated in \cite{Survey58,Dun-SchIII}, underlie the entire subsequent discourse. Here, we touch upon a few facts of special importance.

Due to its {\it strong countable additivity}, the spectral measure $E_A(\cdot)$ is {\it bounded} \cite{Dun-SchI,Dun-SchIII}, i.e., there is such an $M>0$ that, for any Borel set $\delta\subseteq \C$,
\begin{equation}\label{bounded}
\|E_A(\delta)\|\le M.
\end{equation}
Observe that the notation $\|\cdot\|$ is recycled here to designate the norm in the space $L(X)$ of all bounded linear operators on $X$. We shall adhere to this rather conventional economy of symbols in what follows adopting the same notation for the norm in the dual space $X^*$ as well (cf. \cite{Engel-Nagel,Markin2002(2)}). 

For any $f\in X$ and $g^*\in X^*$, the \textit{total variation} $v(f,g^*,\cdot)$ of the complex-valued Borel measure $\langle E_A(\cdot)f,g^* \rangle$ is a {\it finite} positive Borel measure with
\begin{equation}\label{tv}
v(f,g^*,\C)=v(f,g^*,\sigma(A))\le 4M\|f\|\|g^*\|
\end{equation}
(see, e.g., \cite{Markin2004(1),Markin2004(2)}).

Also (Ibid.), for a Borel measurable function $F:\C\to \C$, $f\in D(F(A))$, $g^*\in X^*$, and a Borel set $\delta\subseteq \C$,
\begin{equation}\label{cond(ii)}
\int\limits_\delta|F(\lambda)|\,dv(f,g^*,\lambda)
\le 4M\|E_A(\delta)F(A)f\|\|g^*\|.
\end{equation}
In particular, for $\delta=\sigma(A)$,
\begin{equation}\label{cond(i)}
\int\limits_{\sigma(A)}|F(\lambda)|\,d v(f,g^*,\lambda)\le 4M\|F(A)f\|\|g^*\|.
\end{equation}

Observe that the constant $M>0$ in \eqref{tv}--\eqref{cond(i)} is from 
\eqref{bounded}.

Further, for a Borel measurable function $F:\C\to [0,\infty)$, a Borel set $\delta\subseteq \C$, a sequence $\left\{\Delta_n\right\}_{n=1}^\infty$ 
of pairwise disjoint Borel sets in $\C$, and 
$f\in X$, $g^*\in X^*$,
\begin{equation}\label{decompose}
\int\limits_{\delta}F(\lambda)\,dv(E_A(\cup_{n=1}^\infty \Delta_n)f,g^*,\lambda)
=\sum_{n=1}^\infty \int\limits_{\delta\cap\Delta_n}F(\lambda)\,dv(E_A(\Delta_n)f,g^*,\lambda).
\end{equation}

Indeed, since, for any Borel sets $\delta,\sigma\subseteq \C$,
\begin{equation*}
E_A(\delta)E_A(\sigma)=E_A(\delta\cap\sigma)
\end{equation*}
\cite{Survey58,Dun-SchIII}, 
for the total variation,
\begin{equation*}
v(E_A(\delta)f,g^*,\sigma)=v(f,g^*,\delta\cap\sigma).
\end{equation*}

Whence, due to the {\it nonnegativity} of $F(\cdot)$ (see, e.g., \cite{Halmos}),
\begin{multline*}
\int\limits_\delta F(\lambda)\,dv(E_A(\cup_{n=1}^\infty \Delta_n)f,g^*,\lambda)
=\int\limits_{\delta\cap\cup_{n=1}^\infty \Delta_n}F(\lambda)\,dv(f,g^*,\lambda)
\\
\ \
=\sum_{n=1}^\infty \int\limits_{\delta\cap\Delta_n}F(\lambda)\,dv(f,g^*,\lambda)
=\sum_{n=1}^\infty \int\limits_{\delta\cap\Delta_n}F(\lambda)\,dv(E_A(\Delta_n)f,g^*,\lambda).
\hfill
\end{multline*}

The following statement, allowing to characterize the domains of Borel measurable functions of a scalar type spectral operator in terms of positive Borel measures, is fundamental for our discourse.

\begin{prop}[{\cite[Proposition $3.1$]{Markin2002(1)}}]\label{prop}\ \\
Let $A$ be a scalar type spectral operator in a complex Banach space $(X,\|\cdot\|)$ with spectral measure $E_A(\cdot)$ and $F:\sigma(A)\to \C$ be a Borel measurable function. Then $f\in D(F(A))$ iff
\begin{enumerate}
\item[(i)] for each $g^*\in X^*$, 
$\displaystyle \int\limits_{\sigma(A)} |F(\lambda)|\,d v(f,g^*,\lambda)<\infty$ and
\item[(ii)] $\displaystyle \sup_{\{g^*\in X^*\,|\,\|g^*\|=1\}}
\int\limits_{\{\lambda\in\sigma(A)\,|\,|F(\lambda)|>n\}}
|F(\lambda)|\,dv(f,g^*,\lambda)\to 0,\ n\to\infty$,
\end{enumerate}
where $v(f,g^*,\cdot)$ is the total variation of $\langle E_A(\cdot)f,g^* \rangle$.
\end{prop}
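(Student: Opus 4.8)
The plan is to characterize membership $f\in D(F(A))$ by reducing the abstract limit defining $D(F(A))$ to the scalar measure-theoretic conditions (i) and (ii), using the Borel operational calculus together with the total variation estimates \eqref{cond(ii)}--\eqref{cond(i)} and the decomposition identity \eqref{decompose}. Throughout I would write $\delta_n:=\{\lambda\in\sigma(A)\mid |F(\lambda)|\le n\}$ and $\delta_n^c:=\sigma(A)\setminus\delta_n=\{\lambda\in\sigma(A)\mid |F(\lambda)|>n\}$, so that $F_n(A)f=E_A(\delta_n)F_n(A)f=F(A)E_A(\delta_n)f$ and, for $m>n$, $F_m(A)f-F_n(A)f=F(A)E_A(\delta_m\setminus\delta_n)f$. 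Recall $f\in D(F(A))$ precisely when $\lim_{n\to\infty}F_n(A)f$ exists in $X$, equivalently (by completeness of $X$) when $\{F_n(A)f\}$ is Cauchy.

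For the \emph{necessity} direction, assume $f\in D(F(A))$. Condition (i) is immediate: for fixed $g^*\in X^*$, the monotone convergence theorem applied to the increasing functions $|F|\chi_{\delta_n}$ gives $\int_{\sigma(A)}|F|\,dv(f,g^*,\cdot)=\lim_n\int_{\delta_n}|F|\,dv(f,g^*,\cdot)$, and each term on the right is bounded via \eqref{cond(i)} applied to $F_n$ by $4M\|F_n(A)f\|\|g^*\|$, which is bounded in $n$ since $F_n(A)f$ converges, hence is bounded. For condition (ii), I would apply \eqref{cond(ii)} with the function $F$, the vector $f$, and the Borel set $\delta=\delta_n^c$: but more carefully, since the left-hand integral of (ii) is $\int_{\delta_n^c}|F|\,dv(f,g^*,\cdot)$, I use \eqref{cond(ii)} to bound it by $4M\|E_A(\delta_n^c)F(A)f\|\|g^*\|$ — here $F(A)f$ makes sense since $f\in D(F(A))$ — so the supremum over $\|g^*\|=1$ is at most $4M\|E_A(\delta_n^c)F(A)f\|$. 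Then $E_A(\delta_n^c)F(A)f\to 0$ as $n\to\infty$ because $\delta_n^c\downarrow\varnothing$ and the spectral measure is strongly countably additive (so $E_A(\delta_n^c)h\to 0$ for every $h\in X$, in particular $h=F(A)f$); this yields (ii).

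For the \emph{sufficiency} direction, assume (i) and (ii) hold. I would show $\{F_n(A)f\}$ is Cauchy. For $m>n$, decompose $\delta_m\setminus\delta_n$ into the pairwise disjoint pieces $\Delta_k:=\{\lambda\in\sigma(A)\mid n<|F(\lambda)|\le n+1\}$ (for appropriate ranges of $k$), and estimate $\|F_m(A)f-F_n(A)f\|=\|F(A)E_A(\delta_m\setminus\delta_n)f\|$ by a Hahn–Banach-type duality: $\|x\|=\sup_{\|g^*\|=1}|\langle x,g^*\rangle|$, and $|\langle F(A)E_A(\delta_m\setminus\delta_n)f,g^*\rangle|\le\int_{\delta_m\setminus\delta_n}|F|\,dv(E_A(\delta_m\setminus\delta_n)f,g^*,\cdot)$, which by \eqref{decompose} (or directly by $v(E_A(\delta)f,g^*,\sigma)=v(f,g^*,\delta\cap\sigma)$) equals $\int_{\delta_m\setminus\delta_n}|F|\,dv(f,g^*,\cdot)\le\int_{\delta_n^c}|F|\,dv(f,g^*,\cdot)$. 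Taking the supremum over $\|g^*\|=1$ and invoking (ii), this is less than any prescribed $\varepsilon$ for $n$ large, uniformly in $m>n$; hence $\{F_n(A)f\}$ is Cauchy and converges, so $f\in D(F(A))$. One should also verify in the sufficiency argument that (i) is genuinely needed (it guarantees each integral $\int|F|\,dv(f,g^*,\cdot)$ is finite so that the tail estimates in (ii) are meaningful and the displayed bounds are not vacuously infinite); alternatively (i) plus boundedness of the spectral measure is what lets one pass between $\|F(A)E_A(\delta)f\|$ and the scalar integrals.

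The main obstacle I anticipate is the sufficiency direction, specifically justifying the inequality $|\langle F(A)E_A(\delta_m\setminus\delta_n)f,g^*\rangle|\le\int_{\delta_m\setminus\delta_n}|F|\,dv(f,g^*,\cdot)$ cleanly — one must either approximate $F(A)$ on the bounded piece by the bounded operators $F_j(A)$ for $j\ge m$ and pass to the limit, or argue that $E_A(\delta_m\setminus\delta_n)f\in D(F(A))$ with $F(A)E_A(\delta_m\setminus\delta_n)f$ represented by the obvious spectral integral over the bounded set $\delta_m\setminus\delta_n$ — and then relate $\langle F_j(A)h,g^*\rangle$ to $\int F_j\,d\langle E_A(\cdot)h,g^*\rangle$, bounding the latter by the total variation integral. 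Managing the interchange of limits and the duality carefully, while keeping track of the uniform constant $M$ from \eqref{bounded}, is where the real work lies; everything else is bookkeeping with monotone convergence and strong countable additivity.
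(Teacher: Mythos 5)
The paper does not prove this proposition; it is imported verbatim from \cite{Markin2002(1)}, so there is no in-paper argument to compare against. Your proposal is correct and is essentially the standard proof: necessity of (i) via \eqref{cond(i)} applied to the truncations $F_n$ plus boundedness of the convergent sequence $F_n(A)f$, necessity of (ii) via \eqref{cond(ii)} and strong countable additivity of the spectral measure on $\delta_n^c\downarrow\emptyset$, and sufficiency by showing $\{F_n(A)f\}$ is Cauchy through the duality bound $\|F_m(A)f-F_n(A)f\|\le\sup_{\|g^*\|=1}\int_{\delta_n^c}|F|\,dv(f,g^*,\cdot)$, which is legitimate because $F\chi_{\delta_m\setminus\delta_n}$ is bounded, so the pairing identity and the total-variation estimate need no limiting argument --- the ``main obstacle'' you anticipate is not actually an obstacle. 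Two cosmetic points: the decomposition of $\delta_m\setminus\delta_n$ into annuli is unnecessary (the direct containment $\delta_m\setminus\delta_n\subseteq\delta_n^c$ suffices), and condition (i) is in fact implied by (ii) together with the finiteness of $v(f,g^*,\cdot)$, so your closing worry about where (i) enters the sufficiency argument can be dropped.
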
 

The succeeding key theorem provides a full description of the weak solutions of equation \eqref{1} with a scalar type spectral operator $A$ in a complex Banach space.

\begin{thm}[{\cite[Theorem $4.2$]{Markin2002(1)}}]\label{GWS}\ \\
Let $A$ be a scalar type spectral operator in a complex Banach space $(X,\|\cdot\|)$. A vector function $y:[0,\infty) \to X$ is a weak solution 
of equation \eqref{1} iff there is an $\displaystyle f \in \bigcap_{t\ge 0}D(e^{tA})$ such that
\begin{equation}\label{expf}
y(t)=e^{tA}f,\ t\ge 0,
\end{equation}
the operator exponentials understood in the sense of the Borel operational calculus (see \eqref{exp}).
\end{thm}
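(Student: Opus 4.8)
\emph{The plan.} The plan is to establish both implications, using throughout the equivalent (\emph{mild}) characterization recalled after Definition~\ref{ws}: a strongly continuous $y:[0,\infty)\to X$ is a weak solution of \eqref{1} iff $\int_0^t y(s)\,ds\in D(A)$ and $y(t)=y(0)+A\int_0^t y(s)\,ds$ for every $t\ge0$.

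\emph{Necessity.} Given a weak solution $y$, I set $f:=y(0)$; the case $t=0$ is trivial since $e^{0A}=I$, so fix $t>0$. The device is to pass to the bounded spectral subspaces. For a bounded Borel set $\delta\subseteq\C$ the operator $A_\delta:=AE_A(\delta)=\int_\delta\lambda\,dE_A(\lambda)$ is bounded on $E_A(\delta)X$, and, since $E_A(\delta)$ commutes with $A$ and with Bochner integration, applying $E_A(\delta)$ to the mild identity gives $u(t)=u(0)+A_\delta\int_0^t u(s)\,ds$ for $u(t):=E_A(\delta)y(t)$; continuity of $u$ and boundedness of $A_\delta$ then force $u\in C^1$ with $u'=A_\delta u$, whence $u(t)=e^{tA_\delta}u(0)=\int_\delta e^{t\lambda}\,dE_A(\lambda)f$. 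Applying \eqref{cond(i)} to the bounded function $e^{t\lambda}\chi_\delta$ yields $\int_\delta|e^{t\lambda}|\,dv(f,g^*,\lambda)\le 4M\|E_A(\delta)y(t)\|\,\|g^*\|$ for every bounded $\delta$, and letting $\delta$ run through the disks $\{|\lambda|\le m\}$ extends this to all Borel $\delta$ by monotone convergence on the left and strong $\sigma$-additivity of $E_A(\cdot)$ on the right. Taking $\delta=\sigma(A)$ gives condition (i) of Proposition~\ref{prop} (with the uniform bound $4M\|y(t)\|\,\|g^*\|$), and taking $\delta=\{\lambda\in\sigma(A):|e^{t\lambda}|>n\}$, a family decreasing to $\emptyset$, gives condition (ii); hence $f\in D(e^{tA})$. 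Finally $E_A(\{|\lambda|\le m\})e^{tA}f=\int_{\{|\lambda|\le m\}}e^{t\lambda}\,dE_A(\lambda)f=E_A(\{|\lambda|\le m\})y(t)$, and letting $m\to\infty$ yields $e^{tA}f=y(t)$.

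\emph{Sufficiency.} Suppose $f\in\bigcap_{s\ge0}D(e^{sA})$ and put $y(t):=e^{tA}f$. For strong continuity at a fixed $t_0\ge0$ I estimate, for $t$ in a compact neighborhood of $t_0$ in $[0,\infty)$,
\[
\|e^{tA}f-e^{t_0A}f\|\le\sup_{\|g^*\|=1}\int_{\sigma(A)}|e^{t\lambda}-e^{t_0\lambda}|\,dv(f,g^*,\lambda),
\]
splitting the integral over a disk $\{|\lambda|\le R\}$ and its complement: on the disk $|e^{t\lambda}-e^{t_0\lambda}|\le|t-t_0|\,R\,e^{R(t_0+1)}$ while $v(f,g^*,\cdot)\le4M\|f\|$ by \eqref{tv}, so that part tends to $0$ uniformly in $g^*$ as $t\to t_0$; on the complement one dominates $|e^{t\lambda}-e^{t_0\lambda}|$ by a fixed sum $|e^{s_1\lambda}|+|e^{s_2\lambda}|$ with $s_1,s_2\ge0$, and uses \eqref{cond(ii)} together with $E_A(\{|\lambda|>R\})\to0$ strongly to make that part uniformly small by taking $R$ large. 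For the weak equation, fix $g^*\in D(A^*)$; comparing $\langle AE_A(\delta)f,g^*\rangle=\langle E_A(\delta)f,A^*g^*\rangle$ over bounded $\delta$ shows the complex measures $\lambda\,\langle E_A(\cdot)f,g^*\rangle$ and $\langle E_A(\cdot)f,A^*g^*\rangle$ coincide, whence $dv(f,A^*g^*,\lambda)=|\lambda|\,dv(f,g^*,\lambda)$ and, in particular, $\int_{\sigma(A)}|\lambda|\,dv(f,g^*,\lambda)<\infty$. This integrability permits differentiation under the integral sign in $\langle y(t),g^*\rangle=\int_{\sigma(A)}e^{t\lambda}\,d\langle E_A(\lambda)f,g^*\rangle$: the difference quotients of $t\mapsto e^{t\lambda}$ are dominated near $t$ by $|\lambda|\big(|e^{(t+1)\lambda}|+|e^{(t/2)\lambda}|\big)$ (with the obvious modification at $t_0=0$), which is $v(f,g^*,\cdot)$-integrable because, via the measure identity, $\int|\lambda|\,|e^{s\lambda}|\,dv(f,g^*,\lambda)=\int|e^{s\lambda}|\,dv(f,A^*g^*,\lambda)<\infty$ by condition (i) of Proposition~\ref{prop} applied to the functional $A^*g^*$ (here $f\in D(e^{sA})$ for the relevant $s\ge0$). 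One then obtains $\frac{d}{dt}\langle y(t),g^*\rangle=\int_{\sigma(A)}\lambda e^{t\lambda}\,d\langle E_A(\lambda)f,g^*\rangle=\int_{\sigma(A)}e^{t\lambda}\,d\langle E_A(\lambda)f,A^*g^*\rangle=\langle e^{tA}f,A^*g^*\rangle=\langle y(t),A^*g^*\rangle$, so $y$ is a weak solution.

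\emph{Main obstacle.} I expect the delicate points to be the two uniformity-in-$g^*$ estimates — the equicontinuity bound behind strong continuity and the tail bound behind condition (ii) of Proposition~\ref{prop} — which is precisely where the boundedness \eqref{bounded} of the spectral measure and the total-variation inequalities \eqref{tv}--\eqref{cond(ii)} are indispensable; the reduction to the bounded spectral subspaces in the necessity part is the conceptual crux but is technically routine once the commutation of $E_A(\delta)$ with $A$ and with Bochner integration is invoked.
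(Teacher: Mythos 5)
This theorem is imported verbatim from \cite{Markin2002(1)} and is not reproved in the present paper, so there is no in-text proof to compare against; your argument is correct and follows essentially the same route as the cited source — reduction to the bounded spectral subspaces $E_A(\delta)X$ to identify $E_A(\delta)y(t)$ with $\int_\delta e^{t\lambda}\,dE_A(\lambda)y(0)$, then Proposition \ref{prop} together with the total-variation estimates \eqref{tv}--\eqref{cond(i)} for both directions. The only cosmetic slip is the constant in the necessity step: \eqref{cond(i)} gives $4M\|E_A(\delta)y(t)\|\,\|g^*\|\le 4M^2\|y(t)\|\,\|g^*\|$, not $4M\|y(t)\|\,\|g^*\|$, which of course affects nothing.
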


Theorem \ref{GWS} generalizes {\cite[Theorem $3.1$]{Markin1999}}, its counterpart for a normal operator $A$ in a complex Hilbert space, and implies, in particular,
\begin{itemize}
\item that the subspace $\bigcap_{t\ge 0}D(e^{tA})$ of all possible initial values of the weak solutions of equation \eqref{1} is the largest permissible for the exponential form given by \eqref{expf}, which highlights the contextual naturalness of the notion of weak solution, and
\item that associated \textit{ACP} \eqref{ACP}, whenever solvable,  is solvable \textit{uniquely}.
\end{itemize}

Observe that the initial-value subspace $\bigcap_{t\ge 0}D(e^{tA})$ of equation \eqref{1}, containing the dense in $X$ subspace $\bigcup_{\alpha>0}E_A(\Delta_\alpha)X$, where
\begin{equation*}
\Delta_\alpha:=\left\{\lambda\in\C\,\middle|\,|\lambda|\le \alpha \right\},\ \alpha>0,
\end{equation*}
which coincides with the class ${\mathscr E}^{\{0\}}(A)$ of \textit{entire} vectors of $A$ of \textit{exponential type} (see below), is \textit{dense} in $X$ as well.

When a scalar type spectral operator $A$ in a complex Banach space generates a $C_0$-semigroup $\left\{T(t) \right\}_{t\ge 0}$, 
\[
T(t)=e^{tA}\ \text{and}\ D(e^{tA})=X,\ t\ge 0,
\]
\cite{Markin2002(2)}, and hence, Theorem \ref{GWS} is consistent with the well-known description of the weak solutions for this setup (see \eqref{semigroup}).

We also need the following characterization of a particular weak solution's of equation \eqref{1} with a scalar type spectral operator $A$ in a complex Banach space being strongly infinite differentiable on a subinterval $I$ of $[0,\infty)$.

\begin{prop}[{\cite[Corollary $3.2$]{Markin2011}} with $T=\infty$]\label{Cor}\ \\
Let $A$ be a scalar type spectral operator in a complex Banach space $(X,\|\cdot\|)$ and $I$ be a subinterval of $[0,\infty)$.
A weak solution $y(\cdot)$ of equation \eqref{1} is strongly infinite differentiable on $I$ iff, for each $t\in I$, 
\begin{equation*}
y(t) \in C^\infty(A),
\end{equation*}
in which case
\begin{equation*}
y^{(n)}(t)=A^ny(t),\ n\in \N,t\in I.
\end{equation*}
\end{prop}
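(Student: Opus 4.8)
The plan is to prove the two implications of the ``iff'' separately, using Theorem \ref{GWS} to write $y(t)=e^{tA}f$ for some $f\in\bigcap_{s\ge 0}D(e^{sA})$, and Proposition \ref{prop} together with the Borel operational calculus to translate membership in $C^\infty(A)=\bigcap_{n\in\N}D(A^n)$ and strong differentiability into statements about the finite positive measures $v(y(t),g^*,\cdot)$. Throughout I would exploit the multiplicativity of the operational calculus, namely $A^n e^{tA}f=\int_{\sigma(A)}\lambda^n e^{t\lambda}\,dE_A(\lambda)f$ for $f$ in the appropriate domain, which lets one differentiate ``under the integral sign'' once the requisite domain conditions are verified.

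For the easy direction, suppose $y(\cdot)$ is strongly infinitely differentiable on $I$. Since $y$ is a weak solution, for each $g^*\in D(A^*)$ one has $\frac{d}{dt}\langle y(t),g^*\rangle=\langle y(t),A^*g^*\rangle$; iterating and using that $y'$ is again strongly differentiable, a bootstrap argument (as in \cite{Markin2011}) shows $y(t)\in D(A^n)$ for every $n$ and $y^{(n)}(t)=A^n y(t)$ on $I$. I would make this rigorous by induction on $n$: the base case is the defining property of a weak solution, and for the inductive step one differentiates the identity $y^{(n)}(t)=A^n y(t)$, using the closedness of $A^n$ (a consequence of $A$ being a closed operator with the operational calculus) to pass the limit defining the strong derivative through $A^n$. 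This yields $y(t)\in\bigcap_{n}D(A^n)=C^\infty(A)$ for each $t\in I$.

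For the nontrivial direction, assume $y(t)=e^{tA}f\in C^\infty(A)$ for every $t\in I$. Fix $t_0\in I$. The goal is to show that $t\mapsto e^{tA}f$ has strong derivatives of all orders at $t_0$ with $\frac{d^n}{dt^n}e^{tA}f=A^n e^{tA}f$. The crucial point is a local domination estimate: using Proposition \ref{prop} applied to $F(\lambda)=\lambda^n e^{t_0\lambda}$, the hypothesis $e^{t_0A}f\in D(A^n)$ gives, for each $g^*$, that $\int_{\sigma(A)}|\lambda|^n e^{t_0\operatorname{Re}\lambda}\,dv(f,g^*,\lambda)<\infty$, together with the uniform-in-$g^*$ tail condition (ii). One then forms the difference quotient $h^{-1}\bigl(e^{(t_0+h)A}f-e^{t_0A}f\bigr)-Ae^{t_0A}f$, represents it via $\int_{\sigma(A)}\bigl(h^{-1}(e^{h\lambda}-1)-\lambda\bigr)e^{t_0\lambda}\,dE_A(\lambda)f$, and estimates its norm by testing against unit functionals $g^*$ and invoking \eqref{cond(i)}: the integrand is dominated, for $|h|$ small, by a constant multiple of $|\lambda|^2 e^{t_0\operatorname{Re}\lambda}\cdot e^{|h||\lambda|}$, and a further splitting of $\sigma(A)$ into a bounded part and its complement — controlling the complement by condition (ii) for the function $\lambda^2 e^{t_0\lambda}$ — lets dominated convergence drive the quotient to zero. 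Iterating this argument (the $n$-th derivative handled by the same scheme with $\lambda^{n+1}e^{t_0\lambda}$ in place of $\lambda^2 e^{t_0\lambda}$, again using $y(t_0)\in C^\infty(A)$) establishes strong infinite differentiability on $I$ with $y^{(n)}(t)=A^n y(t)$.

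The main obstacle is the second direction, specifically justifying the interchange of limit and spectral integral uniformly enough to get \emph{strong} (norm) convergence of the difference quotients, rather than merely weak convergence. This is exactly where condition (ii) of Proposition \ref{prop} — the uniform smallness of the tails over unit functionals — is indispensable, and where one must be careful that the bound $e^{|h||\lambda|}$ on the unbounded part of the spectrum does not destroy integrability; restricting $h$ to a small real interval and absorbing $e^{|h||\lambda|}$ into a slightly shifted exponential $e^{(t_0-\varepsilon)\operatorname{Re}\lambda}$ (using $e^{t_0A}f\in D(A^{n})$ and the elementary inequality $|\lambda|^n e^{s|\lambda|}\le C_{n,s,\varepsilon}\,|\lambda|^{n+k}e^{(s+\varepsilon)\operatorname{Re}\lambda}$ valid on a left half-plane containing $\sigma(A)$ near the relevant range) is the technical heart of the estimate. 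Everything else is bookkeeping with the operational calculus and the measures $v(f,g^*,\cdot)$.
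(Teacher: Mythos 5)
The paper does not actually prove this proposition --- it is quoted verbatim from \cite[Corollary 3.2]{Markin2011} --- so the comparison can only be against the standard argument from that source, whose architecture your proposal correctly reproduces: the ``only if'' direction via the duality characterization of $D(A)$ through $D(A^*)$ and the closedness of the powers $A^n$, and the ``if'' direction via difference quotients of the spectral integral $e^{tA}f$, with condition (ii) of Proposition \ref{prop} and estimate \eqref{cond(ii)} supplying the uniformity over the unit ball of $X^*$ needed to upgrade weak to norm convergence. The gap is in your key domination in the ``if'' direction. The bound $\left|h^{-1}(e^{h\lambda}-1)-\lambda\right|\le |h|\,|\lambda|^2e^{|h||\lambda|}$ is true but unusable: the factor $e^{|h||\lambda|}$ involves $|\lambda|$ rather than $\Rep\lambda$, while the hypothesis $y(t)\in C^\infty(A)$, $t\in I$, only controls integrals of $|\lambda|^n e^{t\Rep\lambda}$ against $v(f,g^*,\cdot)$ --- nothing controls $\int e^{\varepsilon|\lambda|}e^{t\Rep\lambda}\,dv$. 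Your proposed repair, the ``elementary inequality'' $|\lambda|^n e^{s|\lambda|}\le C_{n,s,\varepsilon}\,|\lambda|^{n+k}e^{(s+\varepsilon)\Rep\lambda}$, is false: for $\lambda=iR$ the left-hand side is $R^n e^{sR}$ while the right-hand side is $CR^{n+k}$, and the spectrum of a scalar type spectral operator satisfying the hypotheses may perfectly well be unbounded in the imaginary direction. So the dominated-convergence step, as written, fails precisely on the part of $\sigma(A)$ with large $|\Imp\lambda|$.

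The repair is to use a representation whose exponent involves only $\Rep\lambda$: from
\[
h^{-1}(e^{h\lambda}-1)-\lambda=\lambda\int_0^1\bigl(e^{sh\lambda}-1\bigr)\,ds,
\qquad |e^{sh\lambda}|=e^{sh\Rep\lambda}\le 1+e^{h\Rep\lambda},
\]
one obtains, for $|h|\le\delta$,
\[
\bigl|h^{-1}(e^{h\lambda}-1)-\lambda\bigr|\,e^{t_0\Rep\lambda}
\le |\lambda|\Bigl(2e^{t_0\Rep\lambda}+e^{(t_0-\delta)\Rep\lambda}+e^{(t_0+\delta)\Rep\lambda}\Bigr),
\]
and each of the three terms is $v(f,g^*,\cdot)$-integrable because $e^{sA}f\in D(A)$ for $s=t_0,\ t_0\pm\delta\in I$ (with the obvious one-sided modification at an endpoint of $I$). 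This dominating function, combined with the splitting of $\sigma(A)$ into $\{|\lambda|\le n\}$ and its complement and \eqref{cond(ii)} applied to $E_A(\{|\lambda|>n\})Ae^{sA}f$ together with the strong continuity of the spectral measure, gives the uniform-in-$g^*$ convergence of the difference quotients; the higher derivatives are handled identically with an extra factor $|\lambda|^n$, using $e^{sA}f\in D(A^{n+1})$. With this substitution your argument goes through; without it, the ``technical heart'' of your proof does not hold.
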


Subsequently, the frequent terms {\it ``spectral measure"} and {\it ``operational calculus"} are abbreviated to {\it s.m.} and {\it o.c.}, respectively.

\subsection{Gevrey Classes of Functions}\label{GCF}\

\begin{defn}[Gevrey Classes of Functions]\ \\
Let $(X,\|\cdot\|)$ be a (real or complex) Banach space, $C^\infty(I,X)$ be the space of all $X$-valued functions strongly infinite differentiable on an interval $I\subseteq (-\infty,\infty)$, and $0\le \beta<\infty$.

The following subspaces of $C^\infty(I,X)$
\begin{align*}
{\mathscr E}^{\{\beta\}}(I,X):=\bigl\{g(\cdot)\in C^{\infty}(I, X) \bigm |&
\forall [a,b] \subseteq I\ \exists \alpha>0\ \exists c>0:
\\
&\max_{a \le t \le b}\|g^{(n)}(t)\| \le c\alpha^n [n!]^\beta,
\ n\in\Z_+\bigr\},\\
{\mathscr E}^{(\beta)}(I,X):= \bigl\{g(\cdot) \in C^{\infty}(I,X) \bigm |& 
\forall [a,b] \subseteq I\ \forall \alpha > 0 \ \exists c>0:
\\
&\max_{a \le t \le b}\|g^{(n)}(t)\| \le c\alpha^n [n!]^\beta,
\ n\in\Z_+\bigr\},
\end{align*}
are called the {\it $\beta$th-order Gevrey classes} of strongly ultradifferentiable vector functions on $I$ of {\it Roumieu} and {\it Beurling type}, respectively (see, e.g., \cite{Gevrey,Komatsu1,Komatsu2,Komatsu3}).
\end{defn}

In view of {\it Stirling's formula}, the 
sequence $\left\{ [n!]^\beta\right\}_{n=0}^\infty$ can be replaced with
$\left\{ n^{\beta n}\right\}_{n=0}^\infty$.

For $0\le\beta<\beta'<\infty$, the inclusions
\begin{equation*}
{\mathscr E}^{(\beta)}(I,X)\subseteq{\mathscr E}^{\{\beta\}}(I,X)
\subseteq {\mathscr E}^{(\beta')}(I,X)\subseteq
{\mathscr E}^{\{\beta'\}}(I,X)\subseteq C^{\infty}(I,X)
\end{equation*}
hold.

\begin{itemize}
\item For $1<\beta<\infty$, the Gevrey classes
are \textit{non-quasianalytic} (see, e.g., \cite{Komatsu2}).
\item For $\beta=1$, ${\mathscr E}^{\{1\}}(I,X)$ 
is the class of all {\it analytic} on $I$, i.e., {\it analytically continuable} into complex neighborhoods of $I$, vector functions and ${\mathscr E}^{(1)}(I,X)$ is the class of all {\it entire}, i.e., allowing {\it entire} continuations, vector functions \cite{Mandel}.
\item For $0\le\beta<1$, the Gevrey class ${\mathscr E}^{\{\beta\}}(I,X)$ (${\mathscr E}^{(\beta)}(I,X)$) consists of all functions $g(\cdot)\in {\mathscr E}^{(1)}(I,X)$ such that, for some (any) $\gamma>0$, there is an $M>0$ for which
\begin{equation}\label{order}
\|g(z)\|\le Me^{\gamma|z|^{1/(1-\beta)}},\ z\in \C,
\end{equation}
\cite{Markin2001(2)}. In particular,
for $\beta=0$, ${\mathscr E}^{\{0\}}(I,X)$ and ${\mathscr E}^{(0)}(I,X)$ are the classes of entire vector functions of \textit{exponential} and \textit{minimal exponential type}, respectively (see, e.g., \cite{Levin}).
\end{itemize} 

\subsection{Gevrey Classes of Vectors}\

One can consider the Gevrey classes in a more general sense. 

\begin{defn}[Gevrey Classes of Vectors]\ \\
Let $(A,D(A))$ be a densely defined closed linear operator in a (real or complex) Banach space $(X,\|\cdot\|)$, $0\le \beta<\infty$, and
\begin{equation*}
C^{\infty}(A):=\bigcap_{n=0}^{\infty}D(A^n)
\end{equation*}
be the subspace of infinite differentiable vectors of $A$.

The following subspaces of $C^{\infty}(A)$
\begin{align*}
{\mathscr E}^{\{\beta\}}(A)&:=\left\{x\in C^{\infty}(A)\, \middle |\, 
\exists \alpha>0\ \exists c>0:
\|A^nx\| \le c\alpha^n [n!]^\beta,\ n\in\Z_+ \right\},\\
{\mathscr E}^{(\beta)}(A)&:=\left\{x \in C^{\infty}(A)\, \middle|\,\forall \alpha > 0 \ \exists c>0:
\|A^nx\| \le c\alpha^n [n!]^\beta,\ n\in\Z_+ \right\}
\end{align*}
are called the \textit{$\beta$th-order Gevrey classes} of ultradifferentiable vectors of $A$ of \textit{Roumieu} and \textit{Beurling type}, respectively (see, e.g., \cite{GorV83,Gor-Knyaz,book}).
\end{defn}

In view of {\it Stirling's formula}, the 
sequence $\left\{ [n!]^\beta\right\}_{n=0}^\infty$ can be replaced with
$\left\{ n^{\beta n}\right\}_{n=0}^\infty$.

For $0\le\beta<\beta'<\infty$, the inclusions
\begin{equation*}
{\mathscr E}^{(\beta)}(A)\subseteq{\mathscr E}^{\{\beta\}}(A)
\subseteq {\mathscr E}^{(\beta')}(A)\subseteq
{\mathscr E}^{\{\beta'\}}(A)\subseteq C^{\infty}(A)
\end{equation*}
hold.

In particular, ${\mathscr E}^{\{1\}}(A)$ and ${\mathscr E}^{(1)}(A)$ are the classes of {\it analytic} and {\it entire} vectors of $A$, respectively \cite{Goodman,Nelson} and ${\mathscr E}^{\{0\}}(A)$ and ${\mathscr E}^{(0)}(A)$ are the classes of \textit{entire} vectors of $A$ of \textit{exponential} and \textit{minimal exponential type}, respectively (see, e.g., \cite{Radyno1983(1),Gor-Knyaz}).

In view of the \textit{closedness} of $A$, it is easily seen that the class ${\mathscr E}^{(1)}(A)$ forms the subspace of initial values in $X$ generating the (classical) solutions of \eqref{1}, which are entire vector functions represented by the power series
\[
\sum_{n=0}^\infty \dfrac{t^n}{n!}A^nf,\ t\ge 0,
f\in {\mathscr E}^{(1)}(A),
\]
the classes ${\mathscr E}^{\{\beta\}}(A)$ and ${\mathscr E}^{(\beta)}(A)$ with $0\le\beta<1$ being the subspaces of such initial values for which the solutions satisfy growth condition \eqref{order} with some (any) $\gamma>0$ and some $M>0$, respectively (cf. \cite{Levin}).

As is shown in \cite{GorV83} (see also \cite{Gor-Knyaz,book}), if $0<\beta<\infty$, for a {\it normal operator} $A$ in a complex Hilbert space,
\begin{equation}\label{GC}
{\mathscr E}^{\{\beta\}}(A)=\bigcup_{t>0} D(e^{t|A|^{1/\beta}})\ \text{and}\ 
{\mathscr E}^{(\beta)}(A)=\bigcap_{t>0} D(e^{t|A|^{1/\beta}}),
\end{equation}
the operator exponentials $e^{t|A|^{1/\beta}}$, $t>0$, understood in the sense of the Borel operational calculus (see, e.g., \cite{Dun-SchII,Plesner}).

In \cite{Markin2004(2),Markin2015}, descriptions \eqref{GC} are extended  to \textit{scalar type spectral operators} in a complex Banach space, in which form they are basic for our discourse. In \cite{Markin2015}, similar nature descriptions of the classes ${\mathscr E}^{\{0\}}(A)$ and ${\mathscr E}^{(0)}(A)$ ($\beta=0$), known for a normal operator $A$ in a complex Hilbert space (see, e.g., \cite{Gor-Knyaz}), are also generalized to scalar type spectral operators in a complex Banach space. In particular {\cite[Theorem $5.1$]{Markin2015}},
\[
{\mathscr E}^{\{0\}}(A)=\bigcup_{\alpha>0}E_A(\Delta_\alpha)X,
\] 
where
\begin{equation*}
\Delta_\alpha:=\left\{\lambda\in\C\,\middle|\,|\lambda|\le \alpha \right\},\ \alpha>0.
\end{equation*}

\section{Gevrey Ultradifferentiability 
of a Particular Weak Solution}

Here, we characterize a particular weak solution's of equation \eqref{1} with a scalar type spectral operator $A$ in a complex Banach space being strongly Gevrey ultradifferentiable on a subinterval $I$ of $[0,\infty)$.

\begin{prop}\label{particular}
Let $A$ be a scalar type spectral operator in a complex Banach space $(X,\|\cdot\|)$ with spectral measure $E_A(\cdot)$, $0\le \beta<\infty$, and $I$ be a subinterval of $[0,\infty)$. Then the restriction of
a weak solution $y(\cdot)$ of equation \eqref{1} to $I$ belongs to the Gevrey class ${\mathscr E}^{\{\beta\}}(I,X)$
\textup{(${\mathscr E}^{(\beta)}(I,X)$)} iff, for each $t\in I$, 
\begin{equation*}
y(t) \in {\mathscr E}^{\{\beta\}}(A)
\ \textup{(${\mathscr E}^{(\beta)}(A)$, respectively)},
\end{equation*}
in which case
\begin{equation*}
y^{(n)}(t)=A^ny(t),\ n\in \N,t\in I.
\end{equation*}
\end{prop}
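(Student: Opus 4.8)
The plan is to establish the proposition by combining Proposition~\ref{Cor} (which handles the $C^\infty$ case and gives the formula $y^{(n)}(t)=A^ny(t)$) with a careful comparison of the two ways one can measure the "size" of the derivatives: pointwise on $I$ versus uniformly on compact subintervals. First I would prove the forward implication. Suppose $y(\cdot)|_I \in {\mathscr E}^{\{\beta\}}(I,X)$; then in particular $y(\cdot)|_I \in C^\infty(I,X)$, so by Proposition~\ref{Cor} we have $y(t)\in C^\infty(A)$ and $y^{(n)}(t)=A^ny(t)$ for all $n\in\N$, $t\in I$. Fix $t\in I$ and pick any closed subinterval $[a,b]\subseteq I$ with $t\in[a,b]$. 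The Roumieu-type bound supplies $\alpha>0$, $c>0$ with $\max_{a\le s\le b}\|y^{(n)}(s)\|\le c\alpha^n[n!]^\beta$ for all $n\in\Z_+$; specializing to $s=t$ and using $y^{(n)}(t)=A^ny(t)$ gives $\|A^ny(t)\|\le c\alpha^n[n!]^\beta$, i.e. $y(t)\in{\mathscr E}^{\{\beta\}}(A)$. The Beurling case is identical: the "$\forall\alpha>0$" quantifier transfers verbatim from the function-class bound on $[a,b]$ to the vector-class bound at $t$.

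The reverse implication is where the real work lies. Assume $y(t)\in{\mathscr E}^{\{\beta\}}(A)$ for every $t\in I$. Since ${\mathscr E}^{\{\beta\}}(A)\subseteq C^\infty(A)$, Proposition~\ref{Cor} again yields that $y(\cdot)|_I$ is strongly infinitely differentiable with $y^{(n)}(t)=A^ny(t)$; it remains to promote the pointwise vector estimates to estimates uniform on each $[a,b]\subseteq I$. The natural approach is to express $A^ny(s)$ for $s\in[a,b]$ in terms of $A^ny(t_0)$ for a suitable fixed $t_0$, using the operational-calculus identity $y(s)=e^{(s-t_0)A}y(t_0)$ (valid by Theorem~\ref{GWS}, since $y$ is a weak solution and hence $y(s)=e^{sA}f$, so $y(s)=e^{(s-t_0)A}e^{t_0A}f=e^{(s-t_0)A}y(t_0)$), which formally gives $A^ny(s)=e^{(s-t_0)A}A^ny(t_0)$. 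One then estimates $\|A^ny(s)\|=\|e^{(s-t_0)A}A^ny(t_0)\|$ via the total-variation machinery from the Preliminaries: using Proposition~\ref{prop}-style bounds and \eqref{cond(i)}, $\|e^{(s-t_0)A}A^ny(t_0)\|$ is controlled by $\int_{\sigma(A)}e^{(s-t_0)\Rep\lambda}|\lambda|^n\,dv(y(t_0),g^*,\lambda)$ over unit $g^*$, and one splits $\sigma(A)$ according to whether $\Rep\lambda\le 0$ or $\Rep\lambda>0$. On the region $\Rep\lambda\le 0$ the exponential factor is $\le 1$ and the integral is bounded by (a constant times) $\|A^ny(t_0)\|\le c\alpha^n[n!]^\beta$. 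On the region $\Rep\lambda>0$ one uses $s-t_0\le b-a$ together with the vector estimate $\|e^{(b-a)A}A^ny(t_0)\|$: but this requires knowing $y(t_0)\in D(e^{(b-a)A}A^n)$, which is exactly where the hypothesis "$y(t')\in{\mathscr E}^{\{\beta\}}(A)$ for \emph{every} $t'\in I$" is used — one applies the vector estimate at the right endpoint $t'=b$ (or any interior point past $s$) rather than at $t_0$, exploiting that $y(b)=e^{(b-t_0)A}y(t_0)$.

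Concretely, for $s\in[a,b]$ I would fix $t_0:=a$ and write $A^ny(s)=e^{(s-a)A}A^ny(a)$, then bound $\|A^ny(s)\|$ by decomposing into $\{\Rep\lambda\le 0\}$ and $\{\Rep\lambda>0\}$. On the first set $e^{(s-a)\Rep\lambda}\le 1$, giving a bound by (a multiple of) $\|A^ny(a)\|\le c\alpha^n[n!]^\beta$. On the second set $e^{(s-a)\Rep\lambda}\le e^{(b-a)\Rep\lambda}$, so the integral over $\{\Rep\lambda>0\}$ is dominated (up to the factor $4M$) by $\|E_A(\{\Rep\lambda>0\})e^{(b-a)A}A^ny(a)\|\le \|E_A(\{\Rep\lambda>0\})\|\,\|e^{(b-a)A}A^ny(a)\|\le M\|A^ny(b)\|$, using $y(b)=e^{(b-a)A}y(a)$ and the boundedness \eqref{bounded} of the spectral measure. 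Since $y(b)\in{\mathscr E}^{\{\beta\}}(A)$ by hypothesis, $\|A^ny(b)\|\le c'\alpha'^n[n!]^\beta$ for some $c',\alpha'>0$. Taking $\tilde c:=4M(\text{const})\max(c,Mc')$ and $\tilde\alpha:=\max(\alpha,\alpha')$ yields $\max_{a\le s\le b}\|y^{(n)}(s)\|=\max_{a\le s\le b}\|A^ny(s)\|\le \tilde c\,\tilde\alpha^{\,n}[n!]^\beta$, which is precisely the Roumieu bound defining ${\mathscr E}^{\{\beta\}}(I,X)$. For the Beurling type the same computation applies: given arbitrary $\alpha>0$, choose the vector-class constants at $a$ and at $b$ relative to that same $\alpha$ (legitimate since both $y(a),y(b)\in{\mathscr E}^{(\beta)}(A)$), and the resulting $\tilde\alpha$ can be taken to be $\alpha$ itself.

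The main obstacle I anticipate is the domain bookkeeping in the reverse direction: one must justify rigorously that $A^ny(s)=e^{(s-a)A}A^ny(a)$ as an identity in $X$ — i.e. that $A^ny(a)\in D(e^{(s-a)A})$ and that $A^n$ commutes appropriately with $e^{(s-a)A}$ in the operational-calculus sense — and, more delicately, that $y(a)\in D(e^{(b-a)A}A^n)$ so that $\|e^{(b-a)A}A^ny(a)\|$ is finite and equals (a quantity controlled by) $\|A^ny(b)\|$. These are all consequences of the multiplicativity and domain properties of the Borel operational calculus for scalar type spectral operators together with Theorem~\ref{GWS}, but they need to be invoked with care, and the integral-splitting estimate must be carried out over $g^*$ in the unit ball of $X^*$ uniformly, which is where inequalities \eqref{tv}--\eqref{cond(i)} and Proposition~\ref{prop} do the heavy lifting.
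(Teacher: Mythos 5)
Your proposal is correct and follows essentially the same route as the paper: the forward direction specializes the uniform Gevrey bound at each $t$ via Proposition \ref{Cor}, and the reverse direction controls $\max_{a\le t\le b}\|A^n y(t)\|$ by splitting the spectral integral according to the sign of $\Rep\lambda$, bounding the exponential factor by its value at the endpoints, and invoking \eqref{tv}--\eqref{cond(i)} to reduce to $\|A^n y(a)\|$ and $\|A^n y(b)\|$. The only cosmetic differences are that you base the exponential at $t_0=a$ rather than at $0$ and use the localized estimate \eqref{cond(ii)} where the paper extends the integrals to all of $\sigma(A)$ and applies \eqref{cond(i)}.
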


\begin{proof}\ 

\textit{"Only if"} part.\quad Assume that a weak solution $y(\cdot)$ of \eqref{1} restricted to $I$ belongs to ${\mathscr E}^{\{\beta\}}(I,X)$ (${\mathscr E}^{(\beta)}(I,X)$).

This immediately implies that $y(\cdot)\in C^\infty(I,X)$. Whence, by Proposition \ref{Cor},
\[
y(t)\in C^\infty(A),\ t\in I,
\]
and
\[
y^{(n)}(t)=A^ny(t),\ n\in \N,t\in I.
\]

Furthermore, the fact that the restriction of $y(\cdot)$ to $I$ belongs to ${\mathscr E}^{\{\beta\}}(I,X)$ (${\mathscr E}^{(\beta)}(I,X)$)
implies that, for an arbitrary $t\in I$, some (any) $\alpha>0$, and some $c>0$:
\begin{equation*}
\|A^ny(t)\|=\|y^{(n)}(t)\|\le c\alpha^n[n!]^\beta,\ n=Z_+.
\end{equation*}
Therefore, for each $t\in I$,
\begin{equation*}
y(t) \in {\mathscr E}^{\{\beta\}}(A)\ ({\mathscr E}^{(\beta)}(A)).
\end{equation*}

\medskip
\textit{"If"} part.\quad
Let $y(\cdot)$ be a weak solution of equation \eqref{1} such that, for each $t\in I$,
\begin{equation*}
y(t) \in {\mathscr E}^{\{\beta\}}(A)\ ({\mathscr E}^{(\beta)}(A)).
\end{equation*}
Hence, for an arbitrary $t\in I$ and some (any) $\alpha>0$, 
there is a $c(t,\alpha)>0$ such that
\begin{equation}\label{const}
\|A^ny(t)\|\le c(t,\alpha)\alpha^n[n!]^\beta,\ n\in \Z_+.
\end{equation}

The inclusions
\begin{equation*}
{\mathscr E}^{(\beta)}(A)\subseteq
{\mathscr E}^{\{\beta\}}(A)\subseteq C^\infty(A)
\end{equation*}
imply by Proposition \ref{Cor} that 
\[
y(\cdot)\in C^\infty(I,X)
\]
and
\[
y^{(n)}(t)=A^ny(t),\ n\in \N,t\in I.
\]

By Theorem \ref{GWS},
\begin{equation*}
y(t)=e^{tA}f,\ t\ge 0,\ \text{with some}\
f \in \bigcap\limits_{t\ge 0} D(e^{tA}).
\end{equation*}

Fixing an arbitrary $[a,b]\subseteq I$, for every $n\in Z_+$, we have:
\begin{multline*}
\max_{a\le t\le b}\|y^{(n)}(t)\|=\max_{a\le t\le b}\|A^ny(t)\|=
\max_{a\le t\le b}\|A^ne^{tA}f\|
\\
\hfill
\text{by the properties of the \textit{o.c};}
\\
\shoveleft{
=\max_{a\le t\le b}
\left\|\int\limits_{\sigma(A)} \lambda ^n e^{t\lambda}\,d E_A(\lambda)f\right\|
\hfill
\text{as follows form the {\it Hahn-Banach Theorem};}
}\\
\shoveleft{
=\max_{a\le t\le b}
\sup_{g^*\in X^*,\,\|g^*\|=1}
\left|\left\langle
\int\limits_{\sigma(A)} \lambda ^n e^{t\lambda}\,dE_A(\lambda)f,g^*
\right\rangle\right|
\hfill
\text{by the properties of the \textit{o.c};}
}\\
\shoveleft{
=\max_{a\le t\le b}
\sup_{g^*\in X^*,\,\|g^*\|=1}
\left|
\int\limits_{\sigma(A)} \lambda ^n e^{t\lambda}\,d
\langle 
E_A(\lambda)f,g^*
\rangle\right|
}\\
\shoveleft{
\le \max_{a\le t\le b}
\sup_{g^*\in X^*,\,\|g^*\|=1}
\int\limits_{\sigma(A)} |\lambda|^n e^{t\Rep\lambda}\,dv(f,g^*,\lambda)
}\\
\shoveleft{
=\sup_{g^*\in X^*,\,\|g^*\|=1}
\sup_{a\le t\le b}
\biggl[\int\limits_{\{\lambda\in\sigma(A)|\Rep\lambda\le 0\}}|\lambda|^{n}e^{t\Rep\lambda}\,dv(f,g^*,\lambda)
}\\
\shoveleft{
+\int\limits_{\{\lambda\in\sigma(A)|\Rep\lambda>0\}}|\lambda|^{n}e^{t\Rep\lambda}\,dv(f,g^*,\lambda)
\biggr]
}\\
\shoveleft{
\le \sup_{g^*\in X^*,\,\|g^*\|=1}
\biggl[\int\limits_{\{\lambda\in\sigma(A)|\Rep\lambda\le 0\}}|\lambda|^{n}e^{a\Rep\lambda}\,dv(f,g^*,\lambda)
}\\
\shoveleft{
+\int\limits_{\{\lambda\in\sigma(A)|\Rep\lambda>0\}}|\lambda|^{n}e^{b\Rep\lambda}\,dv(f,g^*,\lambda)
\biggr]
}\\
\shoveleft{
\le \sup_{g^*\in X^*,\,\|g^*\|=1}
\left[\int\limits_{\sigma(A)}|\lambda|^{n}e^{a\Rep\lambda}\,dv(f,g^*,\lambda)
+\int\limits_{\sigma(A)} |\lambda|^{n}e^{b\Rep\lambda}\,dv(f,g^*,\lambda)
\right]
}\\
\hfill
\text{by the properties of the \textit{o.c} 
(see \eqref{power} and {\cite[Theorem XVIII.$2.11$ (f)]{Dun-SchIII}})
and \eqref{cond(i)};}
\\
\shoveleft{
\le \sup_{g^*\in X^*,\,\|g^*\|=1}
4M\left[\|A^ne^{aA}f\|+\|A^ne^{bA}f\|\right]\|g^*\|
\le 4M\left[\|A^ne^{aA}f\|+\|A^ne^{bA}f\|\right]
}\\
\ \
=4M\left[\|A^ny(a)\|+\|A^ny(b)\|\right]
=4M\left[\|y^{(n)}(a)\|+\|y^{(n)}(b)\|\right].
\hfill
\end{multline*}

Hence, in view of \eqref{const},
\begin{equation*}
\max_{a\le t\le b}\|y^{(n)}(t)\|\le 4M[c(a,\alpha)+c(b,\alpha)]
\max\left[\alpha(a),\alpha(b)\right]^n[n!]^\beta,
\ n\in \Z_+,
\end{equation*}
which implies that $y(\cdot)$ restricted to $I$
belongs to the Gevrey class ${\mathscr E}^{\{\beta\}}(I,X)$ (${\mathscr E}^{(\beta)}(I,X)$ completing the proof.
\end{proof}

Thus, we have obtained a generalization of 
{\cite[Proposition $3.1$]{Markin2001(1)}}, the counterpart for a normal operator $A$ in a complex Hilbert space. 

\section{Gevrey Ultradifferentiability of Weak Solutions}

In this section, we characterize the strong Gevrey ultradifferentiability of order $\beta\ge 1$ on $[0,\infty)$ of all weak solutions of equation \eqref{1} with a scalar type spectral operator $A$ in a complex Banach space.

\begin{thm}\label{closed}
Let $A$ be a scalar type spectral operator in a complex Banach space $(X,\|\cdot\|)$ with spectral measure $E_A(\cdot)$ and $ 1\le \beta<\infty$. Then the following statements are equivalent. 
\begin{itemize}
\item[\textup{(i)}] Every weak solution of equation \eqref{1} belongs to the $\beta$th-order Gevrey class 
${\mathscr E}^{(\beta )}\left([0,+\infty),X\right)$
of Beurling type.
\item[\textup{(ii)}] Every weak solution of equation \eqref{1} belongs to the $\beta$th-order Gevrey class 
${\mathscr E}^{\{\beta \}}\left([0,+\infty),X\right)$
of Roumieu type.
\item[\textup{(iii)}] There is a $b_+>0$ such that the set
$\sigma(A)\setminus {\mathscr P}^{\beta}_{b_+}$,
where
\begin{equation*}
{\mathscr P}^{\beta}_{b_+}:= \left\{ \lambda \in \C\, \middle|\,
\Rep\lambda \ge b_+|\Imp\lambda|^{1/\beta}\right\},
\end{equation*}
is bounded 
(see Fig. \ref{fig:graph3}).
\end{itemize}
\begin{figure}[h]
\centering
\includegraphics[height=1.6in]{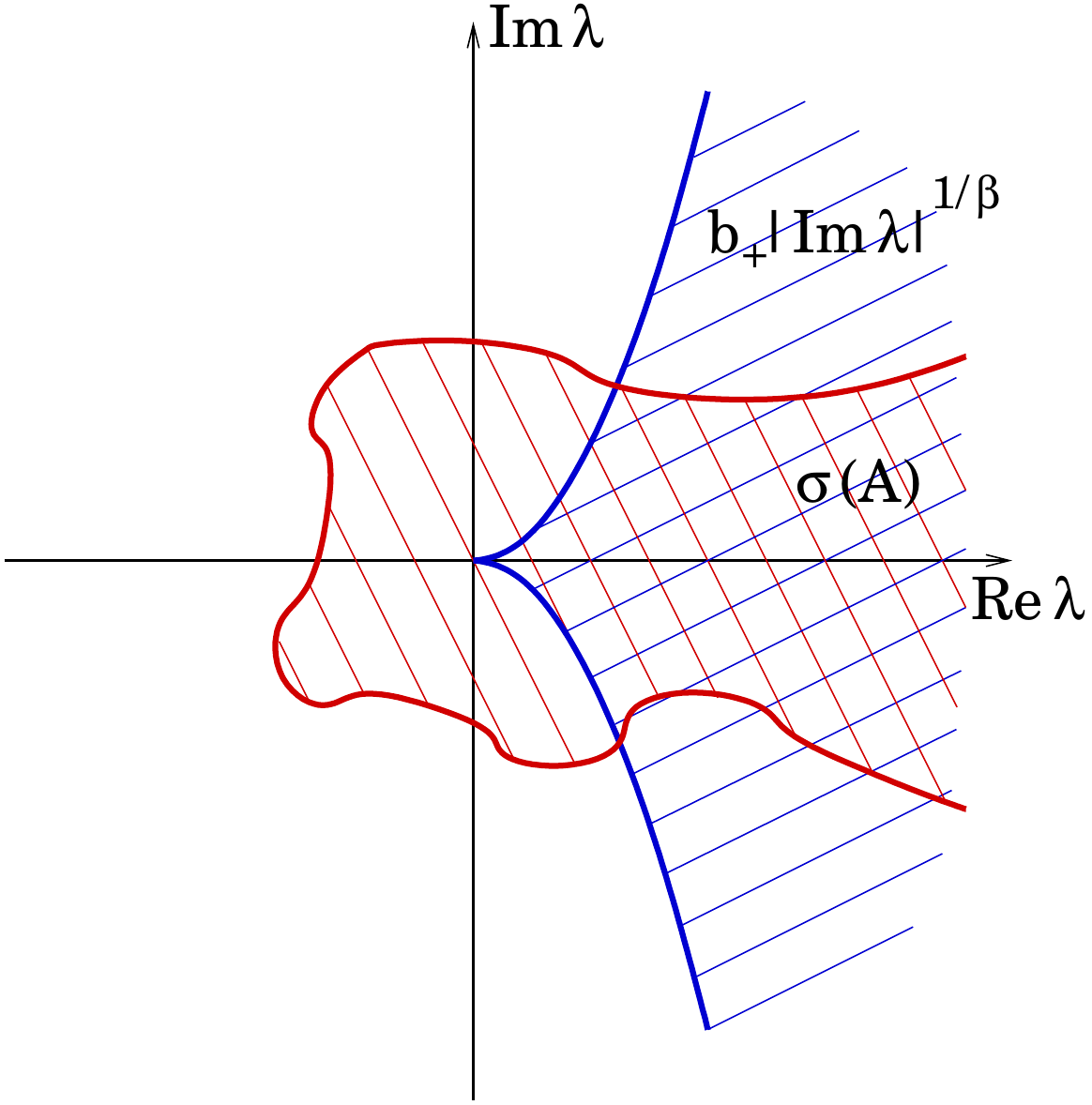}
\caption[]{}
\label{fig:graph3}
\end{figure}
\end{thm}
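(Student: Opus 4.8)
The equivalence (i)$\Leftrightarrow$(ii) is the easy direction, since the Roumieu class contains the Beurling class, so (i)$\Rightarrow$(ii) is immediate; for (ii)$\Rightarrow$(i) I would lean on Proposition \ref{particular}, which translates the function-space condition on a weak solution into the vector-class condition $y(t)\in{\mathscr E}^{\{\beta\}}(A)$ (resp. ${\mathscr E}^{(\beta)}(A)$) at each $t$, and then exploit the fact that every weak solution has the form $y(t)=e^{tA}f$ with $f\in\bigcap_{t\ge0}D(e^{tA})$ (Theorem \ref{GWS}); a uniform boundedness / closed graph argument over $f$ should upgrade the Roumieu-type membership of all solutions to Beurling-type membership. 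The substantive content is the equivalence with the spectral geometric condition (iii), and the natural route is (iii)$\Rightarrow$(i) and $\neg$(iii)$\Rightarrow\neg$(ii).

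\textbf{From the spectral condition to ultradifferentiability.} Assume (iii): outside the parabolic-type region ${\mathscr P}^\beta_{b_+}$ the spectrum is bounded, say contained in $\Delta_R$. Given a weak solution $y(t)=e^{tA}f$ and a fixed $t>0$, I want $y(t)\in{\mathscr E}^{(\beta)}(A)$, i.e., $\|A^n e^{tA}f\|\le c\alpha^n[n!]^\beta$ for every $\alpha>0$. Using Proposition \ref{prop} and the total-variation estimates \eqref{cond(i)}, it suffices to bound $\int_{\sigma(A)}|\lambda|^n e^{t\Rep\lambda}\,dv(f,g^*,\lambda)$ uniformly in $\|g^*\|=1$. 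Split the integral over $\Delta_R$ (where $|\lambda|^n\le R^n$, contributing at most $C R^n$, which is $\le c\alpha^n[n!]^\beta$ for any $\alpha>0$) and over $\sigma(A)\setminus\Delta_R\subseteq{\mathscr P}^\beta_{b_+}$. On the latter set, $\Rep\lambda\ge b_+|\Imp\lambda|^{1/\beta}$, and the pointwise maximization of $|\lambda|^n e^{t\Rep\lambda}$ is the crux: one estimates $|\lambda|^n\le (|\Rep\lambda|+|\Imp\lambda|)^n$, and since $|\Imp\lambda|\le (\Rep\lambda/b_+)^\beta$ one gets $|\lambda|\le \Rep\lambda + (\Rep\lambda/b_+)^\beta$, so for $\beta\ge1$ and $\Rep\lambda$ large the behavior is governed by $(\Rep\lambda)^\beta$. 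Writing $s=\Rep\lambda$, one needs the elementary calculus fact $\sup_{s\ge0}s^{\beta n}e^{-ts}\le (\beta n/(et))^{\beta n}=[n!]^\beta(\beta/(et))^{\beta n}\cdot O(1)$ (via Stirling), which is of the required form $c\alpha^n[n!]^\beta$ — and, crucially, $\alpha=\beta/(et)$ can be made arbitrarily small by taking $t$ large, but since $t$ ranges over a fixed compact $[a,b]\subseteq(0,\infty)$ one uses $t\ge a$ to get a fixed $\alpha$; to land in the Beurling class ${\mathscr E}^{(\beta)}$ on all of $[0,\infty)$ including $t=0$ one checks $t=0$ separately (there $e^{0\cdot A}f=f$ must already lie in ${\mathscr E}^{(\beta)}(A)$, which forces a closer look — in fact for $\beta\ge1$ every weak solution value lies in the relevant class once (iii) holds, and the $n$-independent constant absorbs the finite part). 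Then Proposition \ref{particular} delivers (i).

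\textbf{From failure of (iii) to a badly-behaved solution.} This is the main obstacle. Suppose $\neg$(iii): for every $b>0$ the set $\sigma(A)\setminus{\mathscr P}^\beta_b$ is unbounded. I would construct a weak solution violating (ii) by choosing a sequence of spectral points $\lambda_k\in\sigma(A)$ with $|\lambda_k|\to\infty$ and $\Rep\lambda_k < (1/k)|\Imp\lambda_k|^{1/\beta}$ (so the real parts are very small relative to the imaginary parts), together with disjoint Borel neighborhoods $\Delta_k$ with $E_A(\Delta_k)\ne0$; pick unit vectors $e_k\in E_A(\Delta_k)X$ and functionals $g_k^*$ with $\langle e_k,g_k^*\rangle$ bounded below, and set $f=\sum_k c_k e_k$ with $c_k\to0$ fast enough that $f\in\bigcap_{t\ge0}D(e^{tA})$ (using that $e^{t\Rep\lambda}$ grows slowly on these points because $\Rep\lambda_k$ is small). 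Then $y(t)=e^{tA}f$ is a weak solution by Theorem \ref{GWS}, and using \eqref{cond(ii)} / \eqref{decompose} one estimates $\|A^n y(t)\|$ from below by the $k$th term, $\sim c_k|\lambda_k|^n e^{t\Rep\lambda_k}$; since $|\lambda_k|\sim|\Imp\lambda_k|$ is large while $e^{t\Rep\lambda_k}$ stays near $1$, optimizing over $k$ for each $n$ defeats any bound $c\alpha^n[n!]^\beta$ — the point being that the missing exponential decay $e^{-ts}$ with $s=\Rep\lambda$ large is exactly what made the "only if'' estimate work, and its absence lets $|\lambda_k|^n$ run free. The delicate part is the bookkeeping: choosing $c_k$, $\Delta_k$ and the growth rates so that $f$ lands in the initial-value subspace for all $t\ge0$ simultaneously yet the solution fails even Roumieu-class membership on some $[a,b]$; this is where I expect to spend most of the effort, and where one must be careful that $\neg$(iii) really produces points with $\Rep\lambda$ bounded (not merely small compared to $|\Imp\lambda|^{1/\beta}$ while itself $\to+\infty$) — if $\Rep\lambda_k\to+\infty$ one instead gets a contradiction with $f\in D(e^{tA})$ unless one rescales, so the construction may need two cases. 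Combining this with the easy implications closes the cycle.
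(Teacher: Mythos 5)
Your architecture is the paper's: the cycle $\text{(i)}\Rightarrow\text{(ii)}\Rightarrow\text{(iii)}\Rightarrow\text{(i)}$, with $\text{(ii)}\Rightarrow\text{(iii)}$ by contrapositive via a sequence of spectral points $\lambda_k$ with $\Rep\lambda_k<k^{-2}|\Imp\lambda_k|^{1/\beta}$, disjoint neighborhoods, unit vectors $e_k\in E_A(\Delta_k)X$, and a two-case split according to whether $\{\Rep\lambda_k\}$ is bounded above (the unbounded case requiring the rescaling $c_k=e^{-n(k)\Rep\lambda_{n(k)}}$, which you correctly anticipate). The direct ``uniform boundedness / closed graph'' upgrade of (ii) to (i) is a dead end, but it is also unnecessary since your cycle closes without it. However, two concrete gaps remain. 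First, in $\text{(iii)}\Rightarrow\text{(i)}$ your mechanism for reaching the \emph{Beurling} class is not right: you extract the factor $e^{-ts}$ from the time variable itself, observe that the resulting $\alpha\sim(\beta/(et))^\beta$ shrinks only as $t$ grows, and are then stuck at $t=0$ and on compact intervals $[a,b]$ (a fixed $\alpha$ gives Roumieu, not Beurling). The correct source of arbitrarily strong exponential decay is that $f\in\bigcap_{\tau\ge 0}D(e^{\tau A})$, so $e^{\tau\Rep\lambda}$ is $v(f,g^*,\cdot)$-integrable over $\sigma(A)\cap{\mathscr P}^\beta_{b_+}$ for \emph{every} $\tau>0$; writing $|\lambda|^ne^{t\Rep\lambda}=\bigl[|\lambda|^ne^{-\tau\Rep\lambda}\bigr]e^{(t+\tau)\Rep\lambda}$ and letting $\tau\to\infty$ drives $\alpha\to 0$ uniformly in $t\in[0,\infty)$. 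The paper avoids the $n$-by-$n$ estimates altogether by invoking \eqref{GC} and proving $f\in D(e^{s|A|^{1/\beta}}e^{tA})$ for all $s>0$, $t\ge 0$, which reduces the whole direction to the integrability of $e^{s|\lambda|^{1/\beta}}e^{t\Rep\lambda}$ on ${\mathscr P}^\beta_{b_+}\cap\{\Rep\lambda\ge 1\}$ via the elementary inequality $|\lambda|^{1/\beta}\le(1+b_+^{-\beta})^{1/\beta}\Rep\lambda$ there.

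Second, in the contrapositive direction the lower bound is not secured by ``functionals $g_k^*$ with $\langle e_k,g_k^*\rangle$ bounded below'': with a separate functional for each $k$ you must still control the cross terms $\langle A^ne^{tA}e_j,g_k^*\rangle$, $j\ne k$, and nothing in your setup kills them. The paper's device is a \emph{single} functional $h^*=\sum_k k^{-2}e_k^*$ built from a Hahn--Banach biorthogonal system $\langle e_i,e_j^*\rangle=\delta_{ij}d_i$, where the uniform lower bound $d_k\ge\varepsilon$ is itself a nontrivial lemma proved from the boundedness of the spectral measure; one then works with the total variation $v(f,h^*,\cdot)$ and the decomposition \eqref{decompose}, which localizes each term to its own disk and gives $v(e_k,h^*,\Delta_k)\ge|\langle e_k,h^*\rangle|\ge\varepsilon k^{-2}$ with no cross-term issue. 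Divergence of $\int_{\sigma(A)}e^{s|\lambda|^{1/\beta}}\,dv(f,h^*,\lambda)$ for every $s>0$ then yields $f\notin{\mathscr E}^{\{\beta\}}(A)$ by \eqref{GC} and Proposition \ref{prop}. Finally, note that every membership claim ($f\in\bigcap_{t\ge0}D(e^{tA})$ for your constructed $f$, and $y(t)\in D(e^{s|A|^{1/\beta}})$ in the positive direction) requires verifying \emph{both} conditions of Proposition \ref{prop}, including the vanishing of the tail suprema in condition (ii); your sketch only ever checks finiteness of the integrals.
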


\begin{proof}\quad
We are to prove the closed chain of implications
\begin{equation*}
\text{(i)}\Rightarrow \text{(ii)}\Rightarrow
\text{(iii)}\Rightarrow
\text{(i)},
\end{equation*} 
the implication $\text{(i)}\Rightarrow \text{(ii)}$
following directly from the inclusion
\[
{\mathscr E}^{(\beta )}\left([0,+\infty),X\right)
\subseteq {\mathscr E}^{\{\beta\}}\left( [0,+\infty),X\right)
\]
(see Sec. \ref{GCF}).

To prove the implication $\text{(iii)}\Rightarrow \text{(i)}$, suppose that there is a $b_+>0$ such that the set $\sigma(A)\setminus{\mathscr P}_{b_+}^\beta$ 
is \textit{bounded} and let $y(\cdot)$ be an arbitrary weak solution of equation \eqref{1}. 

By Theorem \ref{GWS}, 
\begin{equation*}
y(t)=e^{tA}f,\ t\ge 0,\ \text{with some}\
f \in \bigcap_{t\ge 0}D(e^{tA}).
\end{equation*}

Our purpose is to show that $y(\cdot)\in {\mathscr E}^{(\beta )}\left([0,+\infty),X\right)$, which, by Proposition \ref{particular} and \eqref{GC}, is accomplished by showing that, for each $t\ge 0$,
\[
y(t)\in {\mathscr E}^{(\beta )}\left(A\right)
=\bigcap_{s>0} D(e^{s|A|^{1/\beta}}).
\]

Let us proceed by proving that, for any $t\ge 0$ and $s>0$,
\[
y(t)\in D(e^{s|A|^{1/\beta}})
\] 
via Proposition \ref{prop}.

For any $s>0$, $t\ge 0$ and an arbitrary $g^*\in X^*$,
\begin{multline}\label{first}
\int\limits_{\sigma(A)}e^{s|\lambda|^{1/\beta}}e^{t\Rep\lambda}\,dv(f,g^*,\lambda)
=\int\limits_{\sigma(A)\setminus{\mathscr P}_{b_+}^\beta}e^{s|\lambda|^{1/\beta}}e^{t\Rep\lambda}\,dv(f,g^*,\lambda)
\\
\shoveleft{
+\int\limits_{\left\{\lambda\in \sigma(A)\cap{\mathscr P}_{b_+}^\beta\,\middle|\,\Rep\lambda<1 \right\}}e^{s|\lambda|^{1/\beta}}e^{t\Rep\lambda}\,dv(f,g^*,\lambda)
}\\
\hspace{1.2cm}
+\int\limits_{\left\{\lambda\in \sigma(A)\cap{\mathscr P}_{b_+}^\beta\,\middle|\,\Rep\lambda\ge 1 \right\}}e^{s|\lambda|^{1/\beta}}e^{t\Rep\lambda}\,dv(f,g^*,\lambda)<\infty.
\hfill
\end{multline}

Indeed, 
\[
\int\limits_{\sigma(A)\setminus{\mathscr P}_{b_+}^\beta}e^{s|\lambda|^{1/\beta}}e^{t\Rep\lambda}\,dv(f,g^*,\lambda)<\infty
\]
and
\[
\int\limits_{\left\{\lambda\in \sigma(A)\cap{\mathscr P}_{b_+}^\beta\,\middle|\,\Rep\lambda<1 \right\}}e^{s|\lambda|^{1/\beta}}e^{t\Rep\lambda}\,dv(f,g^*,\lambda)<\infty
\]
due to the boundedness of the sets
\[
\sigma(A)\setminus{\mathscr P}_{b_+}^\beta\ \text{and}\
\left\{\lambda\in \sigma(A)\cap{\mathscr P}_{b_+}^\beta\;\middle|\;\Rep\lambda<1 \right\},
\]
the continuity of the integrated function 
on $\C$, and the finiteness of the measure $v(f,g^*,\cdot)$.

Further, for any $s>0$, $t\ge 0$ and an arbitrary $g^*\in X^*$,
\begin{multline}\label{interm}
\int\limits_{\left\{\lambda\in \sigma(A)\cap{\mathscr P}_{b_+}^\beta\,\middle|\,\Rep\lambda\ge 1 \right\}}e^{s|\lambda|^{1/\beta}}e^{t\Rep\lambda}\,dv(f,g^*,\lambda)
\\
\shoveleft{
\le\int\limits_{\left\{\lambda\in \sigma(A)\cap{\mathscr P}_{b_+}^\beta\,\middle|\,\Rep\lambda\ge 1 \right\}}e^{s\left[|\Rep\lambda|+|\Imp\lambda|\right]^{1/\beta}}e^{t\Rep\lambda}\,dv(f,g^*,\lambda)
}\\
\hfill
\text{since, for $\lambda\in\sigma(A)\cap{\mathscr P}_{b_+}^\beta$, $b_+^{-\beta}\Rep\lambda^\beta\ge |\Imp\lambda|$;}
\\
\shoveleft{
\le 
\int\limits_{\left\{\lambda\in \sigma(A)\cap{\mathscr P}_{b_+}^\beta\,\middle|\,\Rep\lambda\ge 1 \right\}}e^{s\left[\Rep\lambda+b_+^{-\beta}\Rep\lambda^\beta\right]^{1/\beta}}e^{t\Rep\lambda}\,dv(f,g^*,\lambda)
}\\
\hfill
\text{since, in view of $\Rep\lambda\ge 1$ and $\beta\ge 1$, $\Rep\lambda^\beta\ge\Rep\lambda$;}
\\
\shoveleft{
\le 
\int\limits_{\left\{\lambda\in \sigma(A)\cap{\mathscr P}_{b_+}^\beta\,\middle|\,\Rep\lambda\ge 1 \right\}}e^{s\left(1+b_+^{-\beta}\right)^{1/\beta}\Rep\lambda}e^{t\Rep\lambda}\,dv(f,g^*,\lambda)
}\\
\shoveleft{
= \int\limits_{\left\{\lambda\in \sigma(A)\cap{\mathscr P}_{b_+}^\beta\,\middle|\,\Rep\lambda\ge 1 \right\}}e^{\left[s\left(1+b_+^{-\beta}\right)^{1/\beta}+t\right]\Rep\lambda}\,dv(f,g^*,\lambda)
}\\
\hfill
\text{since $f\in \bigcap\limits_{t\ge 0}D(e^{tA})$, by Proposition \ref{prop};}
\\
\hspace{1.2cm}
<\infty. 
\hfill
\end{multline}

Also, for any $s>0$, $t\ge 0$ and an arbitrary $n\in\N$,
\begin{multline}\label{second}
\sup_{\{g^*\in X^*\,|\,\|g^*\|=1\}}
\int\limits_{\left\{\lambda\in\sigma(A)\,\middle|\,e^{s|\lambda|^{1/\beta}}e^{t\Rep\lambda}>n\right\}}
e^{s|\lambda|^{1/\beta}}e^{t\Rep\lambda}\,dv(f,g^*,\lambda)
\\
\shoveleft{
\le \sup_{\{g^*\in X^*\,|\,\|g^*\|=1\}}
\int\limits_{\left\{\lambda\in\sigma(A)\setminus{\mathscr P}_{b_+}^\beta\,\middle|\,e^{s|\lambda|^{1/\beta}}e^{t\Rep\lambda}>n\right\}}e^{s|\lambda|^{1/\beta}}e^{t\Rep\lambda}\,dv(f,g^*,\lambda)
}\\
\shoveleft{
+ \sup_{\{g^*\in X^*\,|\,\|g^*\|=1\}}
\int\limits_{\left\{\lambda\in\sigma(A)\cap{\mathscr P}_{b_+}^\beta\,\middle|\,\Rep\lambda<1,\, e^{s|\lambda|^{1/\beta}}e^{t\Rep\lambda}>n\right\}}e^{s|\lambda|^{1/\beta}}e^{t\Rep\lambda}\,dv(f,g^*,\lambda)
}\\
\shoveleft{
+ \sup_{\{g^*\in X^*\,|\,\|g^*\|=1\}}
\int\limits_{\left\{\lambda\in\sigma(A)\cap{\mathscr P}_{b_+}^\beta\,\middle|\,\Rep\lambda\ge 1,\, e^{s|\lambda|^{1/\beta}}e^{t\Rep\lambda}>n\right\}}e^{s|\lambda|^{1/\beta}}e^{t\Rep\lambda}\,dv(f,g^*,\lambda)
}\\
\hspace{1.2cm}
\to 0,\ n\to\infty.
\hfill
\end{multline}

Indeed, since, due to the boundedness of the sets
\[
\sigma(A)\setminus{\mathscr P}_{b_+}^\beta\ \text{and}\
\left\{\lambda\in\sigma(A)\cap{\mathscr P}_{b_+}^\beta\,\middle|\,\Rep\lambda<1\right\}
\]
and the continuity of the integrated function on $\C$,
the sets
\[
\left\{\lambda\in\sigma(A)\setminus{\mathscr P}_{b_+}^\beta\,\middle|\,e^{s|\lambda|^{1/\beta}}e^{t\Rep\lambda}>n\right\}
\]
and 
\[
\left\{\lambda\in\sigma(A)\cap{\mathscr P}_{b_+}^\beta\,\middle|\,\Rep\lambda<1,\, e^{s|\lambda|^{1/\beta}}e^{t\Rep\lambda}>n\right\}
\]
are \textit{empty} for all sufficiently large $n\in \N$,
we immediately infer that, for any $s>0$ and $t\ge 0$,
\[
\lim_{n\to\infty}\sup_{\{g^*\in X^*\,|\,\|g^*\|=1\}}
\int\limits_{\left\{\lambda\in\sigma(A)\setminus{\mathscr P}_{b_+}^\beta\,\middle|\,e^{s|\lambda|^{1/\beta}}e^{t\Rep\lambda}>n\right\}}e^{s|\lambda|^{1/\beta}}e^{t\Rep\lambda}\,dv(f,g^*,\lambda)=0
\]
and
\[
\lim_{n\to\infty}\sup_{\{g^*\in X^*\,|\,\|g^*\|=1\}}
\int\limits_{\left\{\lambda\in\sigma(A)\cap{\mathscr P}_{b_+}^\beta\,\middle|\,\Rep\lambda<1,\, e^{s|\lambda|^{1/\beta}}e^{t\Rep\lambda}>n\right\}}e^{s|\lambda|^{1/\beta}}e^{t\Rep\lambda}\,dv(f,g^*,\lambda)
=0.
\]

Further, for any $s>0$, $t\ge 0$, and an arbitrary $n\in\N$, 
\begin{multline*}
\sup_{\{g^*\in X^*\,|\,\|g^*\|=1\}}
\int\limits_{\left\{\lambda\in\sigma(A)\cap{\mathscr P}_{b_+}^\beta\,\middle|\,\Rep\lambda\ge 1,\, e^{s|\lambda|^{1/\beta}}e^{t\Rep\lambda}>n\right\}}e^{s|\lambda|^{1/\beta}}e^{t\Rep\lambda}\,dv(f,g^*,\lambda)
\\
\hfill
\text{as in \eqref{interm};}
\\
\shoveleft{
\le \sup_{\{g^*\in X^*\,|\,\|g^*\|=1\}}
\int\limits_{\left\{\lambda\in\sigma(A)\cap{\mathscr P}_{b_+}^\beta\,\middle|\,\Rep\lambda\ge 1,\, e^{s|\lambda|^{1/\beta}}e^{t\Rep\lambda}>n\right\}}e^{\left[s\left(1+b_+^{-\beta}\right)^{1/\beta}+t\right]\Rep\lambda}\,dv(f,g^*,\lambda)
}\\
\hfill
\text{since $f\in \bigcap\limits_{t\ge 0}D(e^{tA})$, by \eqref{cond(ii)};}
\\
\shoveleft{
\le \sup_{\{g^*\in X^*\,|\,\|g^*\|=1\}}
}\\
\shoveleft{
4M\left\|E_A\left(\left\{\lambda\in\sigma(A)\cap{\mathscr P}_{b_+}^\beta\,\middle|\,\Rep\lambda\ge 1,\, e^{s|\lambda|^{1/\beta}}e^{t\Rep\lambda}>n\right\}\right)
e^{\left[s\left(1+b_+^{-\beta}\right)^{1/\beta}+t\right]A}f\right\|\|g^*\|
}\\
\shoveleft{
\le 4M\left\|E_A\left(\left\{\lambda\in\sigma(A)\cap{\mathscr P}_{b_+}^\beta\,\middle|\,\Rep\lambda\ge 1,\, e^{s|\lambda|^{1/\beta}}e^{t\Rep\lambda}>n\right\}\right)
e^{\left[s\left(1+b_+^{-\beta}\right)^{1/\beta}+t\right]A}f\right\|
}\\
\hfill
\text{by the strong continuity of the {\it s.m.};}
\\
\ \
\to 4M\left\|E_A\left(\emptyset\right)e^{\left[s\left(1+b_+^{-\beta}\right)^{1/\beta}+t\right]A}f\right\|=0,\ n\to\infty.
\hfill
\end{multline*}

By Proposition \ref{prop} and the properties of the \textit{o.c.} (see {\cite[Theorem XVIII.$2.11$ (f)]{Dun-SchIII}}), \eqref{first} and \eqref{second} jointly imply that, for any $t\ge 0$ and $s>0$,
\[
f\in D(e^{s|A|^{1/\beta}}e^{tA}).
\]

In view of \eqref{GC}, the latter implies that, for each $t\ge 0$, 
\begin{equation*}
y(t)=e^{tA}f\in \bigcap_{s>0} D(e^{s|A|^{1/\beta}})
={\mathscr E}^{(\beta )}(A).
\end{equation*}

Whence, by Proposition \ref{particular}, we infer that
\begin{equation*}
y(\cdot) \in {\mathscr E}^{(\beta )}([0,\infty),X),
\end{equation*}
which completes the proof of the implication $\text{(iii)}\Rightarrow \text{(i)}$.

\medskip
Let us prove the remaining implication $\text{(ii)}\Rightarrow \text{(iii)}$ {\it by contrapositive} assuming that, for any $b_+>0$, the set 
$\sigma(A)\setminus {\mathscr P}_{b_+}^\beta$ is \textit{unbounded}. In particular, this means that, for any $n\in \N$, unbounded is the set
\begin{equation*}
\sigma(A)\setminus {\mathscr P}^\beta_{n^{-2}}=
\left\{ \lambda \in \sigma(A)\,\middle| 
\Rep\lambda < n^{-2}|\Imp\lambda|^{1/\beta}\right\}.
\end{equation*} 

Hence, we can choose a sequence of points $\left\{\lambda_n\right\}_{n=1}^\infty$ 
in the complex plane as follows:
\begin{equation*}
\begin{split}
&\lambda_n \in \sigma(A),\ n\in \N,\\
&\Rep\lambda_n <n^{-2}|\Imp\lambda_n|^{1/\beta},\ n\in \N,\\
&\lambda_0:=0,\ |\lambda_n|>\max\left[n,|\lambda_{n-1}|\right],\ n\in \N.\\
\end{split}
\end{equation*}

The latter implies, in particular, that the points $\lambda_n$, $n\in\N$, are \textit{distinct} ($\lambda_i \neq \lambda_j$, $i\neq j$).

Since, for each $n\in \N$, the set
\begin{equation*}
\left\{ \lambda \in {\mathbb C}\,\middle|\, 
\Rep\lambda <n^{-2}|\Imp\lambda|^{1/\beta},\
|\lambda|>\max\bigl[n,|\lambda_{n-1}|\bigr]\right\}
\end{equation*}
is {\it open} in $\C$, along with the point $\lambda_n$, it contains the {\it open disk}
\begin{equation*}
\Delta_n:=\left\{\lambda \in \C\, \middle|\,|\lambda-\lambda_n|<\varepsilon_n \right\}
\end{equation*} 
of some radius $\varepsilon_n>0$, i.e., for each $\lambda \in \Delta_n$,
\begin{equation}\label{disks1}
\Rep\lambda < n^{-2}|\Imp\lambda|^{1/\beta}\ \text{and}\ |\lambda|>\max\bigl[n,|\lambda_{n-1}|\bigr].
\end{equation}

Furthermore, under the circumstances, we can regard the radii of the disks to be small enough so that
\begin{equation}\label{radii1}
\begin{split}
&0<\varepsilon_n<\dfrac{1}{n},\ n\in\N,\ \text{and}\\
&\Delta_i \cap \Delta_j=\emptyset,\ i\neq j
\quad \text{(i.e., the disks are {\it pairwise disjoint})}.
\end{split}
\end{equation}

Whence, by the properties of the {\it s.m.}, 
\begin{equation*}
E_A(\Delta_i)E_A(\Delta_j)=0,\ i\neq j,
\end{equation*}
where $0$ stands for the \textit{zero operator} on $X$.

Observe also, that the subspaces $E_A(\Delta_n)X$, $n\in \N$, are \textit{nontrivial} since
\[
\Delta_n \cap \sigma(A)\neq \emptyset,\ n\in\N,
\]
with $\Delta_n$ being an \textit{open set} in $\C$. 

By choosing a unit vector $e_n\in E_A(\Delta_n)X$, $n\in\N$, we obtain a vector sequence 
$\left\{e_n\right\}_{n=1}^\infty$ such that
\begin{equation}\label{ortho1}
\|e_n\|=1,\ n\in\N,\ \text{and}\ E_A(\Delta_i)e_j=\delta_{ij}e_j,\ i,j\in\N,
\end{equation}
where $\delta_{ij}$ is the \textit{Kronecker delta}.

As is easily seen, \eqref{ortho1} implies that the vectors $e_n$, $n\in \N$, are \textit{linearly independent}.

Furthermore, there is an $\varepsilon>0$ such that
\begin{equation}\label{dist1}
d_n:=\dist\left(e_n,\spa\left(\left\{e_i\,|\,i\in\N,\ i\neq n\right\}\right)\right)\ge\varepsilon,\ n\in\N.
\end{equation}

Indeed, the opposite implies the existence of a subsequence $\left\{d_{n(k)}\right\}_{k=1}^\infty$ such that
\begin{equation*}
d_{n(k)}\to 0,\ k\to\infty.
\end{equation*}

Then, by selecting a vector
\[
f_{n(k)}\in 
\spa\left(\left\{e_i\,|\,i\in\N,\ i\neq n(k)\right\}\right),\ k\in\N,
\] 
such that 
\[
\|e_{n(k)}-f_{n(k)}\|<d_{n(k)}+1/k,\ k\in\N,
\]
we arrive at
\begin{multline*}
1=\|e_{n(k)}\|
\hfill
\text{since, by \eqref{ortho1}, 
$E_A(\Delta_{n(k)})f_{n(k)}=0$;}
\\
\shoveleft{
=\|E_A(\Delta_{n(k)})(e_{n(k)}-f_{n(k)})\|\
\le \|E_A(\Delta_{n(k)})\|\|e_{n(k)}-f_{n(k)}\|
\hfill
\text{by \eqref{bounded};}
}\\
\ \
\le M\|e_{n(k)}-f_{n(k)}\|\le M\left[d_{n(k)}+1/k\right]
\to 0,\ k\to\infty,
\hfill
\end{multline*}
which is a \textit{contradiction} proving \eqref{dist1}. 

As follows from the {\it Hahn-Banach Theorem}, for any $n\in\N$, there is an $e^*_n\in X^*$ such that 
\begin{equation}\label{H-B1}
\|e_n^*\|=1,\ n\in\N,\ \text{and}\ \langle e_i,e_j^*\rangle=\delta_{ij}d_i,\ i,j\in\N.
\end{equation}

Let us consider separately the two possibilities concerning the sequence of the real parts $\{\Rep\lambda_n\}_{n=1}^\infty$: its being \textit{bounded above} or \textit{unbounded above}. 

First, suppose that the sequence $\{\Rep\lambda_n\}_{n=1}^\infty$ is \textit{bounded above}, i.e., there is such an $\omega>0$ that
\begin{equation}\label{bounded1}
\Rep\lambda_n \le \omega,\ n\in\N,
\end{equation}
and consider the element
\begin{equation*}
f:=\sum_{k=1}^\infty k^{-2}e_k\in X,
\end{equation*}
which is well defined since $\left\{k^{-2}\right\}_{k=1}^\infty\in l_1$ ($l_1$ is the space of absolutely summable sequences) and $\|e_k\|=1$, $k\in\N$ (see \eqref{ortho1}).

In view of \eqref{ortho1}, by the properties of the \textit{s.m.},
\begin{equation}\label{vectors1}
E_A(\cup_{k=1}^\infty\Delta_k)f=f\ \text{and}\ E_A(\Delta_k)f=k^{-2}e_k,\ k\in\N.
\end{equation}

For any $t\ge 0$ and an arbitrary $g^*\in X^*$,
\begin{multline}\label{first1}
\int\limits_{\sigma(A)}e^{t\Rep\lambda}\,dv(f,g^*,\lambda)
\hfill \text{by \eqref{vectors1};}
\\
\shoveleft{
=\int\limits_{\sigma(A)} e^{t\Rep\lambda}\,d v(E_A(\cup_{k=1}^\infty \Delta_k)f,g^*,\lambda)
\hfill
\text{by \eqref{decompose};}
}\\
\shoveleft{
=\sum_{k=1}^\infty\int\limits_{\sigma(A)\cap\Delta_k}e^{t\Rep\lambda}\,dv(E_A(\Delta_k)f,g^*,\lambda)
\hfill 
\text{by \eqref{vectors1};}
}\\
\shoveleft{
=\sum_{k=1}^\infty k^{-2}\int\limits_{\sigma(A)\cap\Delta_k}e^{t\Rep\lambda}\,dv(e_k,g^*,\lambda)
}\\
\hfill
\text{since, for $\lambda\in \Delta_k$, by \eqref{bounded1} and \eqref{radii1},}\ 
\Rep\lambda=\Rep\lambda_k+(\Rep\lambda-\Rep\lambda_k)
\\
\hfill
\le \Rep\lambda_k+|\lambda-\lambda_k|\le \omega+\varepsilon_k\le \omega+1;
\\
\shoveleft{
\le e^{t(\omega+1)}\sum_{k=1}^\infty k^{-2}\int\limits_{\sigma(A)\cap\Delta_k}1\,dv(e_k,g^*,\lambda)
= e^{t(\omega+1)}\sum_{k=1}^\infty k^{-2}v(e_k,g^*,\Delta_k)
}\\
\hfill
\text{by \eqref{tv};}
\\
\hspace{1.2cm}
\le e^{t(\omega+1)}\sum_{k=1}^\infty k^{-2}4M\|e_k\|\|g^*\|
= 4Me^{t(\omega+1)}\|g^*\|\sum_{k=1}^\infty k^{-2}<\infty.
\hfill
\end{multline} 

Similarly, for any $t\ge 0$ and an arbitrary $n\in\N$,
\begin{multline}\label{second1}
\sup_{\{g^*\in X^*\,|\,\|g^*\|=1\}}
\int\limits_{\left\{\lambda\in\sigma(A)\,\middle|\,e^{t\Rep\lambda}>n\right\}} 
e^{t\Rep\lambda}\,dv(f,g^*,\lambda)
\\
\shoveleft{
\le 
\sup_{\{g^*\in X^*\,|\,\|g^*\|=1\}}e^{t(\omega+1)}\sum_{k=1}^\infty k^{-2}
\int\limits_{\left\{\lambda\in\sigma(A)\,\middle|\,e^{t\Rep\lambda}>n\right\}\cap \Delta_k}1\,dv(e_k,g^*,\lambda) 
}\\
\hfill \text{by \eqref{vectors1};}
\\
\shoveleft{
=e^{t(\omega+1)}\sup_{\{g^*\in X^*\,|\,\|g^*\|=1\}}\sum_{k=1}^\infty 
\int\limits_{\left\{\lambda\in\sigma(A)\,\middle|\,e^{t\Rep\lambda}>n\right\}\cap \Delta_k}1\,dv(E_A(\Delta_k)f,g^*,\lambda) 
}\\
\hfill \text{by \eqref{decompose};}
\\
\shoveleft{
= e^{t(\omega+1)}\sup_{\{g^*\in X^*\,|\,\|g^*\|=1\}}
\int\limits_{\{\lambda\in\sigma(A)\,|\,e^{t\Rep\lambda}>n\}}1\,dv(E_A(\cup_{k=1}^\infty\Delta_k)f,g^*,\lambda)
}\\
\hfill \text{by \eqref{vectors1};}
\\
\shoveleft{
= e^{t(\omega+1)}\sup_{\{g^*\in X^*\,|\,\|g^*\|=1\}}
\int\limits_{\{\lambda\in\sigma(A)\,|\,e^{t\Rep\lambda}>n\}}1\,dv(f,g^*,\lambda)
\hfill
\text{by \eqref{cond(ii)};}
}\\
\shoveleft{
\le e^{t(\omega+1)}\sup_{\{g^*\in X^*\,|\,\|g^*\|=1\}}4M\left\|E_A\left(\left\{\lambda\in\sigma(A)\,\middle|\,e^{t\Rep\lambda}>n\right\}\right)f\right\|\|g^*\|
}\\
\shoveleft{
\le 4Me^{t(\omega+1)}\left\|E_A\left(\left\{\lambda\in\sigma(A)\,\middle|\,e^{t\Rep\lambda}>n\right\}\right)f\right\|
}\\
\hfill
\text{by the strong continuity of the {\it s.m.};}
\\
\hspace{1.2cm}
\to 4Me^{t(\omega+1)}\left\|E_A\left(\emptyset\right)f\right\|=0,\ n\to\infty.
\hfill
\end{multline}

By Proposition \ref{prop}, \eqref{first1} and \eqref{second1} jointly imply that
\[
f\in \bigcap\limits_{t\ge 0}D(e^{tA}),
\] 
and hence,
by Theorem \ref{GWS},
\[
y(t):=e^{tA}f,\ t\ge 0,
\]
is a weak solution of equation \eqref{1}.

Let
\begin{equation}\label{functional1}
h^*:=\sum_{k=1}^\infty k^{-2}e_k^*\in X^*,
\end{equation}
the functional being well defined since $\{k^{-2}\}_{k=1}^\infty\in l_1$ and $\|e_k^*\|=1$, $k\in\N$ (see \eqref{H-B1}).

In view of \eqref{H-B1} and \eqref{dist1}, we have:
\begin{equation}\label{funct-dist1}
\langle e_k,h^*\rangle=\langle e_k,k^{-2}e_k^*\rangle=d_k k^{-2}\ge \varepsilon k^{-2},\ k\in\N.
\end{equation}

For any $s>0$,
\begin{multline*}
\int\limits_{\sigma(A)}e^{s|\lambda|^{1/\beta}}\,dv(f,h^*,\lambda)
\hfill
\text{by \eqref{decompose} as in \eqref{first1};}
\\
\shoveleft{
=\sum_{k=1}^\infty k^{-2}\int\limits_{\sigma(A)\cap\Delta_k}e^{s|\lambda|^{1/\beta}}\,dv(e_k,h^*,\lambda)
\hfill
\text{since, for $\lambda\in \Delta_k$, by \eqref{disks1}, $|\lambda|\ge k$;}
}\\
\shoveleft{
\ge
\sum_{k=1}^\infty k^{-2}e^{sk^{1/\beta}}\int\limits_{\sigma(A)\cap\Delta_k}1\,dv(e_k,h^*,\lambda)
= \sum_{k=1}^\infty k^{-2}e^{sk^{1/\beta}} v(e_k,h^*,\Delta_k)
}\\
\shoveleft{
\ge\sum_{k=1}^\infty k^{-2}e^{sk^{1/\beta}}|\langle E_A(\Delta_k)e_k,h^*\rangle|
\hfill
\text{by \eqref{ortho1} and \eqref{funct-dist1};}
}\\
\ \
\ge \sum_{k=1}^\infty \varepsilon k^{-4}e^{sk^{1/\beta}}=\infty.
\hfill
\end{multline*} 

Whence, by Proposition \ref{prop} and \eqref{GC}, we infer that
\[
y(0)=f\notin \bigcup_{s>0} D(e^{s|A|^{1/\beta}})
={\mathscr E}^{\{\beta\}}(A)
\]
which, by Proposition \ref{particular}, implies that the weak solution $y(t)=e^{tA}f$, $t\ge 0$, 
of equation \eqref{1} does not belong to the 
Gevrey class ${\mathscr E}^{\{\beta \}}\left( [0,+\infty),X\right)$ of Roumieu
type and completes our consideration of the case of
the sequence's $\{\Rep\lambda_n\}_{n=1}^\infty$ being \textit{bounded above}. 

Now, suppose that the sequence $\{\Rep\lambda_n\}_{n=1}^\infty$
is \textit{unbounded above}. 

Therefore, there is a subsequence $\{\Rep\lambda_{n(k)}\}_{k=1}^\infty$ such that
\begin{equation}\label{infinity}
\Rep\lambda_{n(k)} \ge k,\ k\in\N.
\end{equation}

Consider the elements
\begin{equation*}
f:=\sum_{k=1}^\infty e^{-n(k)\Rep\lambda_{n(k)}}e_{n(k)}\in X
\ \text{and}\ h:=\sum_{k=1}^\infty e^{-\frac{n(k)}{2}\Rep\lambda_{n(k)}}e_{n(k)}\in X,
\end{equation*}
well defined since, by \eqref{infinity},
\[
\left\{e^{-n(k)\Rep\lambda_{n(k)}}\right\}_{k=1}^\infty,
\left\{e^{-\frac{n(k)}{2}\Rep\lambda_{n(k)}}\right\}_{k=1}^\infty
\in l_1
\]
and $\|e_{n(k)}\|=1$, $k\in\N$ (see \eqref{ortho1}).

By \eqref{ortho1},
\begin{equation}\label{subvectors1}
E_A(\cup_{k=1}^\infty\Delta_{n(k)})f=f\ \text{and}\
E_A(\Delta_{n(k)})f=e^{-n(k)\Rep\lambda_{n(k)}}e_{n(k)},\
k\in\N,
\end{equation}
and
\begin{equation}\label{subvectors2}
E_A(\cup_{k=1}^\infty\Delta_{n(k)})h=h\ \text{and}\
E_A(\Delta_{n(k)})h=e^{-\frac{n(k)}{2}\Rep\lambda_{n(k)}}e_{n(k)},\ k\in\N.
\end{equation}

For any $t\ge 0$ and an arbitrary $g^*\in X^*$, 
\begin{multline}\label{first2}
\int\limits_{\sigma(A)}e^{t\Rep\lambda}\,dv(f,g^*,\lambda)
\hfill
\text{by \eqref{decompose} as in \eqref{first1};}
\\
\shoveleft{
=\sum_{k=1}^\infty e^{-n(k)\Rep\lambda_{n(k)}}\int\limits_{\sigma(A)\cap\Delta_{n(k)}}e^{t\Rep\lambda}\,dv(e_{n(k)},g^*,\lambda)
}\\
\hfill
\text{since, for $\lambda\in \Delta_{n(k)}$, by \eqref{radii1},}\ \Rep\lambda
=\Rep\lambda_{n(k)}+(\Rep\lambda-\Rep\lambda_{n(k)})
\\
\hfill
\le \Rep\lambda_{n(k)}+|\lambda-\lambda_{n(k)}|\le \Rep\lambda_{n(k)}+1;
\\
\shoveleft{
\le \sum_{k=1}^\infty e^{-n(k)\Rep\lambda_{n(k)}}
e^{t(\Rep\lambda_{n(k)}+1)}
\int\limits_{\sigma(A)\cap\Delta_{n(k)}}1\,dv(e_{n(k)},g^*,\lambda)
}\\
\shoveleft{
= e^t\sum_{k=1}^\infty e^{-[n(k)-t]\Rep\lambda_{n(k)}}v(e_{n(k)},g^*,\Delta_{n(k)})
\hfill
\text{by \eqref{tv};}
}\\
\shoveleft{
\le e^t\sum_{k=1}^\infty e^{-[n(k)-t]\Rep\lambda_{n(k)}}4M\|e_{n(k)}\|\|g^*\|
= 4Me^t\|g^*\|\sum_{k=1}^\infty e^{-[n(k)-t]\Rep\lambda_{n(k)}}
}\\
\hspace{1.2cm}
<\infty.
\hfill
\end{multline}

Indeed, for all $k\in \N$ sufficiently large so that
\[
n(k)\ge t+1,
\]
in view of \eqref{infinity}, 
\[
e^{-[n(k)-t]\Rep\lambda_{n(k)}}\le e^{-k}.
\]

Similarly, for any $t\ge 0$ and an arbitrary,
\begin{multline}\label{second2}
\sup_{\{g^*\in X^*\,|\,\|g^*\|=1\}}
\int\limits_{\left\{\lambda\in\sigma(A)\,\middle|\,e^{t\Rep\lambda}>n\right\}}e^{t\Rep\lambda}\,dv(f,g^*,\lambda)
\\
\shoveleft{
\le \sup_{\{g^*\in X^*\,|\,\|g^*\|=1\}}e^t\sum_{k=1}^\infty e^{-[n(k)-t]\Rep\lambda_{n(k)}}
\int\limits_{\left\{\lambda\in\sigma(A)\,\middle|\,e^{t\Rep\lambda}>n\right\}\cap \Delta_{n(k)}}1\,dv(e_{n(k)},g^*,\lambda)
}\\
\shoveleft{
=e^t\sup_{\{g^*\in X^*\,|\,\|g^*\|=1\}}\sum_{k=1}^\infty e^{-\left[\frac{n(k)}{2}-t\right]\Rep\lambda_{n(k)}}
e^{-\frac{n(k)}{2}\Rep\lambda_{(k)}}
}\\
\shoveleft{
\int\limits_{\left\{\lambda\in\sigma(A)\,\middle|\,e^{t\Rep\lambda}>n\right\}\cap \Delta_{n(k)}}1\,dv(e_{n(k)},g^*,\lambda)
}\\
\hfill
\text{since, by \eqref{infinity}, there is an $L>0$ such that
$e^{-\left[\frac{n(k)}{2}-t\right]\Rep\lambda_{n(k)}}\le L$, $k\in\N$;}
\\
\shoveleft{
\le Le^t\sup_{\{g^*\in X^*\,|\,\|g^*\|=1\}}\sum_{k=1}^\infty e^{-\frac{n(k)}{2}\Rep\lambda_{n(k)}}
\int\limits_{\left\{\lambda\in\sigma(A)\,\middle|\,e^{t\Rep\lambda}>n\right\}\cap \Delta_{n(k)}}1\,dv(e_{n(k)},g^*,\lambda)
}\\
\hfill
\text{by \eqref{subvectors2};}
\\
\shoveleft{
= Le^t\sup_{\{g^*\in X^*\,|\,\|g^*\|=1\}}\sum_{k=1}^\infty
\int\limits_{\left\{\lambda\in\sigma(A)\,\middle|\,e^{t\Rep\lambda}>n\right\}\cap \Delta_{n(k)}}1\,dv(E_A(\Delta_{n(k)})h,g^*,\lambda)
}\\
\hfill
\text{by \eqref{decompose};}
\\
\shoveleft{
= Le^t\sup_{\{g^*\in X^*\,|\,\|g^*\|=1\}}
\int\limits_{\left\{\lambda\in\sigma(A)\,\middle|\,e^{t\Rep\lambda}>n\right\}}1\,dv(E_A(\cup_{k=1}^\infty\Delta_{n(k)})h,g^*,\lambda)
}\\
\hfill
\text{by \eqref{subvectors2};}
\\
\shoveleft{
=Le^t\sup_{\{g^*\in X^*\,|\,\|g^*\|=1\}}\int\limits_{\{\lambda\in\sigma(A)\,|\,e^{t\Rep\lambda}>n\}}1\,dv(h,g^*,\lambda)
\hfill
\text{by \eqref{cond(ii)};}
}\\
\shoveleft{
\le Le^t\sup_{\{g^*\in X^*\,|\,\|g^*\|=1\}}4M
\left\|E_A\left(\left\{\lambda\in\sigma(A)\,\middle|\,e^{t\Rep\lambda}>n\right\}\right)h\right\|\|g^*\|
}\\
\shoveleft{
\le 4LMe^t\|E_A(\{\lambda\in\sigma(A)\,|\,e^{t\Rep\lambda}>n\})h\|
}\\
\hfill
\text{by the strong continuity of the {\it s.m.};}
\\
\hspace{1.2cm}
\to 4LMe^t\left\|E_A\left(\emptyset\right)h\right\|=0,\ n\to\infty.
\hfill
\end{multline}

By Proposition \ref{prop}, \eqref{first2} and \eqref{second2} jointly imply that
\[
f\in \bigcap\limits_{t\ge 0}D(e^{tA}),
\] 
and hence,
by Theorem \ref{GWS},
\[
y(t):=e^{tA}f,\ t\ge 0,
\]
is a weak solution of equation \eqref{1}.

Since, for any $\lambda \in \Delta_{n(k)}$, $k\in \N$, by \eqref{radii1}, \eqref{infinity},
\begin{multline*}
\Rep\lambda =\Rep\lambda_{n(k)}-(\Rep\lambda_{n(k)}-\Rep\lambda)
\ge
\Rep\lambda_{n(k)}-|\Rep\lambda_{n(k)}-\Rep\lambda|
\\
\ \ \
\ge 
\Rep\lambda_{n(k)}-\varepsilon_{n(k)}
\ge \Rep\lambda_{n(k)}-1/n(k)\ge k-1\ge 0
\hfill
\end{multline*}
and, by \eqref{disks1},
\[
\Rep\lambda<n(k)^{-2}|\Imp\lambda|^{1/\beta},
\]
we infer that, for any $\lambda \in \Delta_{n(k)}$, $k\in \N$,
\begin{equation*}
|\lambda|\ge|\Imp\lambda|\ge 
\left[n(k)^2\Rep\lambda\right]^\beta\ge \left[n(k)^2(\Rep\lambda_{n(k)}-1/n(k))\right]^\beta.
\end{equation*}

Using this estimate, for an arbitrary $s>0$ and the functional $h^*\in X^*$ defined by \eqref{functional1}, we have:
\begin{multline}\label{notin}
\int\limits_{\sigma(A)}e^{s|\lambda|^{1/\beta}}\,dv(f,h^*,\lambda)
\hfill
\text{by \eqref{decompose} as in \eqref{first1};}
\\
\shoveleft{
=\sum_{k=1}^\infty e^{-n(k)\Rep\lambda_{n(k)}}\int\limits_{\sigma(A)\cap\Delta_{n(k)}}e^{s|\lambda|^{1/\beta}}\,dv(e_{n(k)},h^*,\lambda)
}\\
\shoveleft{
\ge\sum_{k=1}^\infty e^{-n(k)\Rep\lambda_{n(k)}}e^{sn(k)^2(\Rep\lambda_{n(k)}-1/n(k))}v(e_{n(k)},h^*,\Delta_{n(k)})
}\\
\shoveleft{
\ge \sum_{k=1}^\infty e^{-n(k)\Rep\lambda_{n(k)}}e^{sn(k)^2(\Rep\lambda_{n(k)}-1/n(k))}|\langle E_A(\Delta_{n(k)})e_{n(k)},h^*\rangle|
}\\
\hfill
\text{by \eqref{ortho1} and \eqref{funct-dist1};}
\\
\hspace{1.2cm}
\ge \sum_{k=1}^\infty \varepsilon
e^{(sn(k)-1)n(k)\Rep\lambda_{n(k)}-sn(k)}n(k)^{-2}
=\infty.
\hfill
\end{multline} 

Indeed, for all $k\in\N$ sufficiently large so that 
\begin{equation*}
sn(k)\ge s+2,
\end{equation*}
in view of \eqref{infinity},
\begin{multline*}
e^{(sn(k)-1)n(k)\Rep\lambda_{n(k)}-sn(k)}n(k)^{-2}
\ge 
e^{(s+1)n(k)-sn(k)}n(k)^{-2}=e^{n(k)}n(k)^{-2}
\\
\ \
\to\infty,\ k\to\infty.
\hfill
\end{multline*}

By Proposition \ref{prop} and \eqref{GC},
\eqref{notin} implies that
\[
y(0)=f\notin \bigcup_{s>0} D(e^{s|A|^{1/\beta}})
={\mathscr E}^{\{\beta\}}(A)
\]
which, by Proposition \ref{particular}, further implies that the weak solution $y(t)=e^{tA}f$, $t\ge 0$, 
of equation \eqref{1} does not belong to the 
Gevrey class ${\mathscr E}^{\{\beta \}}\left( [0,+\infty),X\right)$ of Roumieu
type and completes our consideration of the case of
the sequence's $\{\Rep\lambda_n\}_{n=1}^\infty$ being \textit{unbounded above}. 

With every possibility concerning $\{\Rep\lambda_n\}_{n=1}^\infty$ considered, 
the proof by contrapositive of the implication 
$\text{(ii)}\Rightarrow \text{(iii)}$ is complete and so is the proof of the 
entire statement.
\end{proof}

For $\beta=1$, we obtain the following important particular case.

\begin{cor}[Characterization of the Entireness of Weak Solutions]\label{CEWS}\ \\
Let $A$ be a scalar type spectral operator in a complex Banach space $(X,\|\cdot\|)$. Every weak solution of equation \eqref{1} is an entire vector function iff there is a $b_+>0$ such that the set $\sigma(A)\setminus {\mathscr P}^{1}_{b_+}$, where
\begin{equation*}
{\mathscr P}^{1}_{b_+}:= \left\{ \lambda \in \C\,\middle|\,
\Rep\lambda \ge b_+|\Imp\lambda|\right\},
\end{equation*}
is bounded (see Fig. \ref{fig:graph5}).

\begin{figure}[h]
\centering
\includegraphics[height=1.6in]{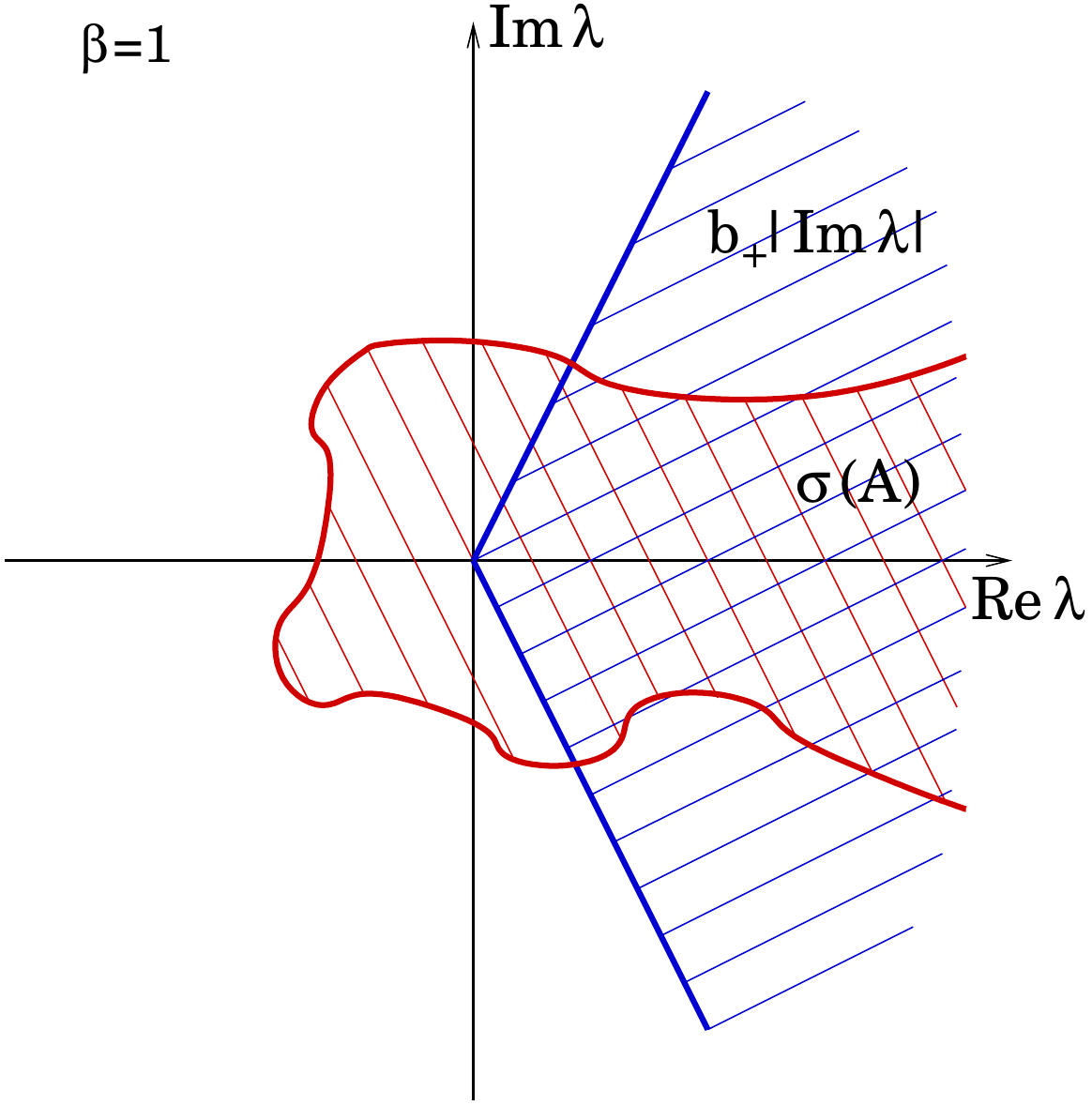}
\caption[]{}
\label{fig:graph5}
\end{figure}
\end{cor}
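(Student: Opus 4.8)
The plan is to read the corollary off directly from Theorem \ref{closed} specialized to $\beta=1$. First I would recall from Sec. \ref{GCF} that, for $\beta=1$, the Beurling-type Gevrey class ${\mathscr E}^{(1)}\left([0,\infty),X\right)$ is precisely the class of entire vector functions, i.e., the $X$-valued functions on $[0,\infty)$ admitting entire continuations. Hence statement (i) of Theorem \ref{closed} with $\beta=1$ is nothing other than the assertion that every weak solution of \eqref{1} is an entire vector function.

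Next I would observe that for $\beta=1$ the set ${\mathscr P}^{\beta}_{b_+}$ of Theorem \ref{closed} becomes
\[
{\mathscr P}^{1}_{b_+}=\left\{\lambda\in\C\,\middle|\,\Rep\lambda\ge b_+|\Imp\lambda|^{1/1}\right\}
=\left\{\lambda\in\C\,\middle|\,\Rep\lambda\ge b_+|\Imp\lambda|\right\},
\]
so that statement (iii) of Theorem \ref{closed} with $\beta=1$ is verbatim the spectral condition in the corollary. The equivalence $(\textup{i})\Leftrightarrow(\textup{iii})$ furnished by Theorem \ref{closed} then establishes the corollary at once, with no further work required.

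There is essentially no obstacle here, since the content is entirely contained in Theorem \ref{closed}; the only point requiring a modicum of care is the correct identification of the order-one Gevrey classes with the classical function classes — the Beurling type ${\mathscr E}^{(1)}$ with the entire vector functions and the Roumieu type ${\mathscr E}^{\{1\}}$ with the analytic ones — which is recorded in Sec. \ref{GCF}. Finally, it is worth remarking, as an instance of the smoothness improvement effects announced in the introduction, that the full chain of equivalences in Theorem \ref{closed} actually yields more: the a priori weaker requirement that every weak solution of \eqref{1} merely be analytic on $[0,\infty)$ — i.e., lie in ${\mathscr E}^{\{1\}}\left([0,\infty),X\right)$, statement (ii) — already forces every weak solution to be entire, the underlying spectral characterization being the same in both cases.
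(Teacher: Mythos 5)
Your proposal is correct and coincides with the paper's (implicit) argument: the corollary is obtained by specializing Theorem \ref{closed} to $\beta=1$ and invoking the identification from Sec. \ref{GCF} of ${\mathscr E}^{(1)}\left([0,\infty),X\right)$ with the entire vector functions, exactly as you do. The additional remark about statement (ii) forcing entireness matches the smoothness-improvement discussion the paper gives immediately afterward.
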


Observe that the region ${\mathscr P}^{1}_{b_+}$
is an angular sector with the vertex at the origin,
bisected by the positive $x$-semi-axis (see Fig. \ref{fig:graph5}).

Thus, we have obtained generalizations of Theorem $4.1$ and the entireness characterization contained in \cite{Markin2001(1)}, the counterparts for a normal operator $A$ in a complex Hilbert space.

As follows from the prior characterization, all weak solutions of equation \eqref{1} with a scalar type spectral operator $A$ in a complex Banach space can 
attain the level of strong smoothness as high as \textit{entireness} while the operator $A$ remains \textit{unbounded}, e.g., when $A$ is a semibounded below \textit{self-adjoint} unbounded operator in a complex Hilbert space (see {\cite[Corollary $4.1$]{Markin2001(1)}} and, for symmetric operators, {\cite[Theorem $6.1$]{Markin2001(1)}}). This fact contrasts the situation when a closed densely defined linear operator $A$ in a complex Banach space generates a $C_0$-semigroup, in which case the strong differentiability of all weak solutions of \eqref{1} at $0$ immediately implies \textit{boundedness} for $A$ (cf. \cite{Engel-Nagel}, see also \cite{Markin2002(2)}).

\section{Certain Inherent Smoothness Improvement Effects}

Theorem \ref{closed} implies, in particular, that
\begin{quote}
\textit{if, for some $ 1\le \beta<\infty$, every weak solution of equation \eqref{1} with a scalar type spectral operator $A$ in a complex Banach space $X$ belongs to the Gevrey class 
${\mathscr E}^{\{\beta\}}\left([0,\infty),X\right)$
of Roumieu type, then all of them belong to the narrower Gevrey class
${\mathscr E}^{(\beta)}\left([0,\infty),X\right)$
of Beurling type},
\end{quote} 
which is a jump-like effect of the weak solutions' smoothness improvement.

Notably, for $\beta =1$, we obtain the following statement: 
\begin{quote}
\textit{if every weak solution of equation \eqref{1} with a scalar type spectral operator $A$ in a complex Banach space $X$ is analytically continuable into a complex neighborhood $[0,\infty)$ (each one into its own), then all of them are entire vector functions},
\end{quote}
which can be further strengthened as follows.

\begin{prop}\label{smimp}
Let $A$ be a scalar type spectral operator in a complex Banach space $(X,\|\cdot\|)$. If every weak solution of equation \eqref{1} is analytically continuable into a complex neighborhood of $0$ (each one into its own), then all of them are entire vector functions.
\end{prop}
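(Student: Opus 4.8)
The plan is to reduce Proposition \ref{smimp} to the already-proved Theorem \ref{closed} (with $\beta=1$) by showing that the \emph{pointwise} hypothesis ``every weak solution is analytically continuable near $0$'' forces the \emph{global} hypothesis ``every weak solution belongs to ${\mathscr E}^{\{1\}}([0,\infty),X)$.'' Granting that, Theorem \ref{closed} for $\beta=1$ (equivalently, Corollary \ref{CEWS}) immediately upgrades all weak solutions to entire vector functions, which is the desired conclusion. So the real content is the local-to-global step: analyticity of each weak solution on a one-sided neighborhood $[0,\delta_y)$ of $0$ implies analyticity on all of $[0,\infty)$.

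The key mechanism is the translation-invariance of the solution family. If $y(\cdot)$ is a weak solution then, by Theorem \ref{GWS}, $y(t)=e^{tA}f$ with $f\in\bigcap_{t\ge 0}D(e^{tA})$; for any fixed $\tau\ge 0$ the shifted function $y_\tau(t):=y(t+\tau)=e^{tA}\bigl(e^{\tau A}f\bigr)$ is again of this form, with initial value $e^{\tau A}f$ which still lies in $\bigcap_{t\ge 0}D(e^{tA})$ (this needs the semigroup-type identity $e^{tA}e^{\tau A}f=e^{(t+\tau)A}f$ on the relevant domains, which is part of the properties of the o.c.\ already invoked throughout the paper). Hence $y_\tau$ is itself a weak solution, so by hypothesis it is analytically continuable into a complex neighborhood of $0$; that is, $y$ is analytic on a neighborhood of $\tau$. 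Since $\tau\ge 0$ was arbitrary, $y$ is analytic on all of $[0,\infty)$, i.e.\ $y(\cdot)\in{\mathscr E}^{\{1\}}([0,\infty),X)$.

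With that established, the proof closes as follows: the above shows statement (ii) of Theorem \ref{closed} holds for $\beta=1$; therefore statement (i) holds as well, so every weak solution lies in ${\mathscr E}^{(1)}([0,\infty),X)$, the class of entire vector functions. Equivalently one may phrase the conclusion through Proposition \ref{particular}: analyticity of $y(\cdot)$ near each point $t\ge 0$ gives $y(t)\in{\mathscr E}^{\{1\}}(A)$ for all $t$, hence the spectral condition of Corollary \ref{CEWS} is forced, hence $y(t)\in{\mathscr E}^{(1)}(A)$ for all $t$, hence $y(\cdot)$ is entire.

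The step I expect to be the main (though modest) obstacle is the careful justification that the shift $y_\tau$ is a \emph{bona fide} weak solution — specifically, verifying $e^{\tau A}f\in\bigcap_{t\ge 0}D(e^{tA})$ and the exponential composition identity in the unbounded, possibly non-semigroup-generating setting. This is handled by the operational calculus: $f\in D(e^{(t+\tau)A})$ for all $t$, and by {\cite[Theorem XVIII.$2.11$]{Dun-SchIII}} the product $e^{tA}e^{\tau A}$ agrees with $e^{(t+\tau)A}$ on $D(e^{(t+\tau)A})$, which yields both the membership and the identity $y_\tau(t)=e^{tA}(e^{\tau A}f)$. Everything else is a direct appeal to Theorem \ref{GWS}, Theorem \ref{closed}, and Corollary \ref{CEWS}.
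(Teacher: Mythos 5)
Your overall architecture --- shift the solution so that an arbitrary $\tau\ge 0$ becomes the origin, invoke the hypothesis there, and feed the resulting global statement into Theorem \ref{closed} with $\beta=1$ --- is exactly the paper's, and your verification that $y_\tau(t)=e^{tA}\bigl(e^{\tau A}f\bigr)$ is again a weak solution is fine (it also follows at once from Definition \ref{ws}, since the weak formulation is translation-invariant). The gap is in the local-to-global step. The hypothesis, transported by the shift, tells you only that $y$ coincides with an analytic function on some \emph{right} neighborhood $[\tau,\tau+\eta_\tau)$ of each $\tau$, with radii $\eta_\tau$ over which you have no control; your sentence ``that is, $y$ is analytic on a neighborhood of $\tau$ \dots\ hence $y$ is analytic on all of $[0,\infty)$'' silently upgrades this to two-sided analyticity and then to the locally uniform Roumieu bounds defining ${\mathscr E}^{\{1\}}([0,\infty),X)$. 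That inference is false for general functions: $g(t)=\max(0,t-1)$ is continuous, and every right-translate $g_\tau$ is analytically continuable into a complex neighborhood of $0$ (for $\tau<1$ by the zero function on the disk of radius $1-\tau$, for $\tau\ge 1$ because $g_\tau$ is affine), yet $g$ is not even differentiable at $t=1$. So something specific to weak solutions of \eqref{1} must be used to pass from the pointwise, one-sided information at each $\tau$ to membership in the function class, and that is precisely what your main argument omits.

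The repair is the route you mention only as an ``equivalent'' rephrasing at the end --- and it is not equivalent; it is the proof. From the analytic continuability of $y_\tau$ near $0$ one extracts, via the Taylor expansion of the extension at $0$ (boundedness of the terms at $t=\delta$) together with Proposition \ref{Cor}, the bounds $\|A^ny(\tau)\|=\|y_\tau^{(n)}(0)\|\le c\,\delta^{-n}n!$, i.e.\ $y(\tau)\in{\mathscr E}^{\{1\}}(A)$ for every $\tau\ge 0$. Then Proposition \ref{particular} --- whose proof uses the spectral representation to interpolate, $\max_{a\le t\le b}\|A^ne^{tA}f\|\le 4M\bigl[\|A^ny(a)\|+\|A^ny(b)\|\bigr]$ --- converts this pointwise vector-class membership into $y\in{\mathscr E}^{\{1\}}([0,\infty),X)$, after which Theorem \ref{closed} (or Corollary \ref{CEWS}) finishes as you say. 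This is exactly the paper's proof run forward instead of by contrapositive. Make the power-series estimate and the appeal to Proposition \ref{particular} explicit, and your argument closes.
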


\begin{proof}\quad
Let us show first that, if a weak solution $y(\cdot)$ of 
equation \eqref{1} is analytically continuable into a
complex neighborhood of $0$, then $y(0)$ is an {\it analytic vector} of the operator $A$, i.e.,
\begin{equation*}
y(0)\in {\mathscr E}^{\{1\}}(A).
\end{equation*}

Let a weak solution $y(\cdot)$ of equation \eqref{1}
be analytically continuable
into a complex neighborhood of $0$. This implies that there is a $\delta>0$ such that
\begin{equation*}
y(t)=\sum_{n=0}^\infty \dfrac{y^{(n)}(0)}{n!}t^n,\ t\in [0,\delta].
\end{equation*}

The power series converging at $t=\delta$, there is a $c>0$ such that
\begin{equation*}
\biggl\|\dfrac{y^{(n)}(0)}{n!}\delta^n\biggr\|\le c,\ n\in Z_+.
\end{equation*}

Whence, considering that, by Proposition \ref{Cor} with $I=[0,\delta]$,
\[
y(0)\in C^\infty(A)\ \text{and}\ y^{(n)}(0)=A^n y(0),\
n\in\Z_+,
\]
we infer that
\begin{equation*}
\|A^ny(0)\|=\|y^{(n)}(0)\|\le c\left[\delta^{-1}\right]^n n!,\ n\in\Z_+,
\end{equation*}
which implies
\begin{equation*}
y(0)\in {\mathscr E}^{\{1\}}(A).
\end{equation*}

Now, let us prove the statement {\it by contrapositive}
assuming that there is a weak solution of 
equation \eqref{1}, which is not an entire vector function. This, by Theorem \ref{closed} with $\beta=1$, implies that there is a weak solution $y(\cdot)$ of 
equation \eqref{1}, which is not analytically continuable into a complex neighborhood of $[0,\infty)$. Then, by Proposition \ref{particular}, for some
$t_0\ge 0$,
\[
y(t_0)\not \in {\mathscr E}^{\{1\}}(A).
\]

Therefore, for the weak solution
\[
y_{t_0}(t):=y(t+t_0),\ t\ge 0,
\]
of equation \eqref{1},
\[
y_{t_0}(0)=y(t_0)\notin {\mathscr E}^{\{1\}}(A),
\]
which, as is shown above, implies that $y_{t_0}(\cdot)$
is not analytically continuable into a complex neighborhood of $0$, and hence, completes the proof by contrapositive.
\end{proof}

Thus, we have obtained a generalization of {\cite[Proposition $5.1$]{Markin2001(1)}}, the counterpart for a normal operator $A$ in a complex Hilbert space.

\section{Concluding Remark}

Due to the {\it scalar type spectrality} of the operator $A$, Theorem \ref{closed} is stated exclusively in terms of the location of its {\it spectrum} in the complex plane as well as the celebrated \textit{Lyapunov stability theorem} \cite{Lyapunov1892} 
(cf. {\cite[Ch. I, Theorem 2.10]{Engel-Nagel}}),
and thus, is intrinsically qualitative (cf. \cite{Markin2011}).

\section{Dedication and Acknowledgments}

This work is gratefully dedicated to Dr. Valentina I. Gorbachuk, a remarkable person and mathematician, whose life and work have profoundly inspired the author. 

The author also extends sincere appreciation to his colleague, Dr.~Maria Nogin of the Department of Mathematics, California State University, Fresno, for her kind assistance with the graphics.


\end{document}